\renewcommand\footnotetextcopyrightpermission[1]{} 
\newcommand{\T}[1]{\boldsymbol{\mathscr{#1}}}
\newcommand{\bit}{\begin{itemize*}}
\newcommand{\eit}{\end{itemize*}}
\newcommand{\hide}[1]{}
\newcolumntype{C}[1]{>{\centering\arraybackslash}p{#1}}
\newcolumntype{R}[1]{>{\RaggedLeft\arraybackslash}p{#1}}
\begin{document}
\title{CTD: Fast, Accurate, and Interpretable Method for Static and \\Dynamic Tensor Decompositions
}


\author{Jungwoo Lee}
\affiliation{%
  \institution{Seoul National University}
}
\email{muon9401@gmail.com}

\author{Dongjin Choi}
\affiliation{%
  \institution{Seoul National University}
}
\email{skywalker5@snu.ac.kr}

\author{Lee Sael}
\affiliation{%
  \institution{The State University of New York (SUNY) Korea}
}
\email{sael@sunykorea.ac.kr}


\setlength{\textfloatsep}{0.2cm}
\setlength{\abovecaptionskip}{0.2cm}

\begin{abstract}
How can we find patterns and anomalies in a tensor, or multi-dimensional array, in an efficient and directly interpretable way?
How can we do this in an online environment, where a new tensor arrives each time step?
Finding patterns and anomalies in a tensor is a crucial problem with many applications, including building safety monitoring, patient health monitoring, cyber security, terrorist detection, and fake user detection in social networks.
Standard PARAFAC and Tucker decomposition results are not directly interpretable.
Although a few sampling-based methods have previously been proposed towards better interpretability, they need to be made faster, more memory efficient, and more accurate.

In this paper, we propose CTD, a fast, accurate, and directly interpretable tensor decomposition method based on sampling.
CTD-S, the static version of CTD, provably guarantees a high accuracy that is 17$\sim$83$\times$ more accurate than that of the state-of-the-art method.
Also, CTD-S is made 5$\sim$86$\times$ faster, and 7$\sim$12$\times$ more memory-efficient than the state-of-the-art method by removing redundancy.
CTD-D, the dynamic version of CTD, is the first interpretable dynamic tensor decomposition method ever proposed.
Also, it is made 2$\sim$3$\times$ faster than already fast CTD-S by exploiting factors at previous time step and by reordering operations.
With CTD, we demonstrate how the results can be effectively interpreted in the online distributed denial of service (DDoS) attack detection. 
\end{abstract}

%
%
\begin{CCSXML}
	<ccs2012>
	<concept>
	<concept_id>10002951.10003227.10003351</concept_id>
	<concept_desc>Information systems~Data mining</concept_desc>
	<concept_significance>300</concept_significance>
	</concept>
	</ccs2012>
\end{CCSXML}

\ccsdesc[300]{Information systems~Data mining}

\maketitle
\section{Introduction}
\label{sec:intro}
Given a tensor, or multi-dimensional array, how can we find patterns and anomalies in an efficient and directly interpretable way?
How can we do this in an online environment, where a new tensor arrives each time step?
Many real-world data are multi-dimensional and can be modeled as sparse tensors.
Examples include network traffic data (source IP - destination IP - time), movie rating data (user - movie - time), IoT sensor data, and healthcare data.
Finding patterns and anomalies in those tensor data is a very important problem with many applications such as building safety monitoring \cite{khoa2015}, patient health monitoring \cite{prada2012, wang2015,cyganek2015,perros2015}, cyber security \cite{thuraisingham2006}, terrorist detection \cite{clifton2010,allanach2004,arulselvan2009}, and fake user detection in social networks \cite{cao2012, kontaxis2011}.
Tensor decomposition method, a widely-used tool in tensor analysis, has been used for this task.
However, the standard tensor decomposition methods such as PARAFAC \cite{harshman1970foundations} and Tucker \cite{tucker1966some} do not provide interpretability and are not applicable for real-time analysis in environments with high-velocity data.

Sampling-based tensor decomposition methods \cite{drineas2007randomized,caiafa2010generalizing, mahoney2008tensor} arose as an alternative to due to their direct interpretability.
The direct interpretability not only reduces time and effort involved in finding patterns and anomalies from the decomposed tensors but also provides clarity in interpreting the result.
A sampling-based decomposition method for sparse tensors is also memory-efficient since it preserves the sparsity of the original tensors on the sampled factor matrices.
However, existing sampling-based tensor decomposition methods are slow, have high memory usage, and produce low accuracy.
For example, tensor-CUR \cite{mahoney2008tensor}, the state-of-the-art sampling-based static tensor decomposition method, has many redundant fibers including duplicates in its factors.
These redundant fibers cause higher memory usage and longer running time.
Tensor-CUR is also not accurate enough for real-world tensor analysis.

In addition to interpretability, demands for online method applicable in a dynamic environment, where multi-dimensional data are generated continuously at a fast rate, are also increasing.
A real-time analysis is not feasible with static methods since all the data, i.e., historical and incoming tensors, need to be decomposed over again at each time step.
There are a few dynamic tensor decomposition methods proposed \cite{sun2006beyond, sun2006window, DBLP:conf/kdd/ZhouVBJD16}. However, proposed methods are not directly interpretable and do not preserve sparsity.
To the best of our knowledge, there has been no sampling-based dynamic tensor decomposition method proposed.

\begin{table}[tbp]
	\small
	\setlength{\tabcolsep}{1pt}
	\caption{Comparison of our proposed CTD and the existing tensor-CUR. The static method CTD-S outperforms the state-of-the-art tensor-CUR in terms of time, memory usage, and accuracy. The dynamic method CTD-D is the fastest and the most accurate.} \label{tab:properties of the existing methods}
	\begin{center}
		{
			\begin{tabular}{C{2cm} C{2cm} C{2cm} C{2cm}}
				\toprule
				& Existing & \multicolumn{2}{c}{[Proposed]}  \\
				\cmidrule{2-4}
				& Tensor-CUR \cite{mahoney2008tensor} & \textbf{CTD-S} & \textbf{CTD-D}  \\
				
				\midrule
				
				\textbf{Interpretability}& \checkmark & \checkmark & \checkmark \\
				
				\textbf{Time} & fast & faster & \textbf{fastest} \\
				
				\textbf{Memory usage} & low & \textbf{lower} & low \\
				
				\textbf{Accuracy} & low & high & \textbf{highest} \\
				
				\textbf{Online} &  &  & \checkmark \\
				
				\bottomrule
				
			\end{tabular}
		}
	\end{center}
	\label{tab:comparison}
\end{table}

\begin{figure*} [t]
\subfloat[\textbf{Hypertext 2009}]
{	\includegraphics[width=0.23 \textwidth]{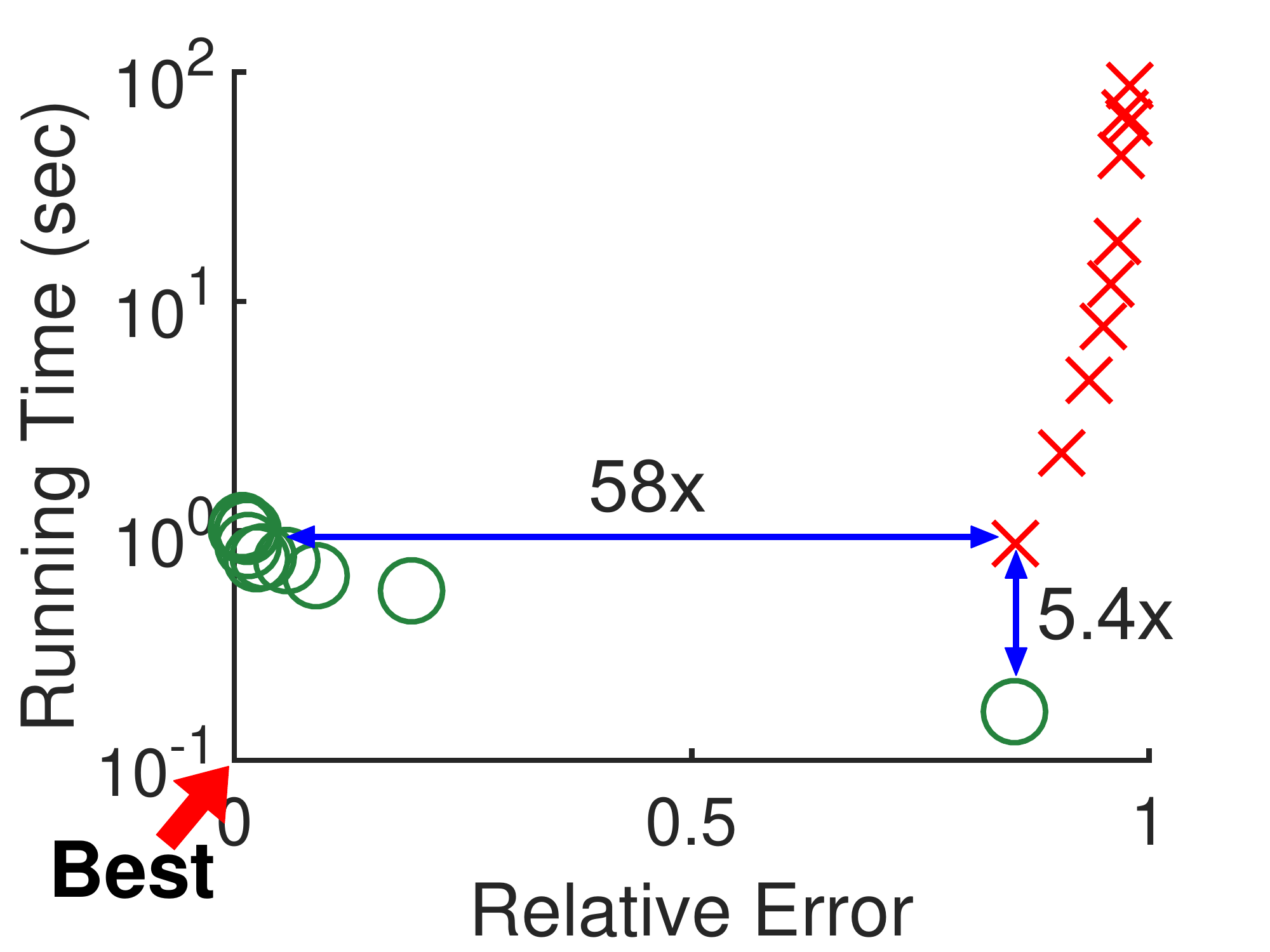}
}
\subfloat[\textbf{Haggle}]
{	\includegraphics[width=0.23 \textwidth]{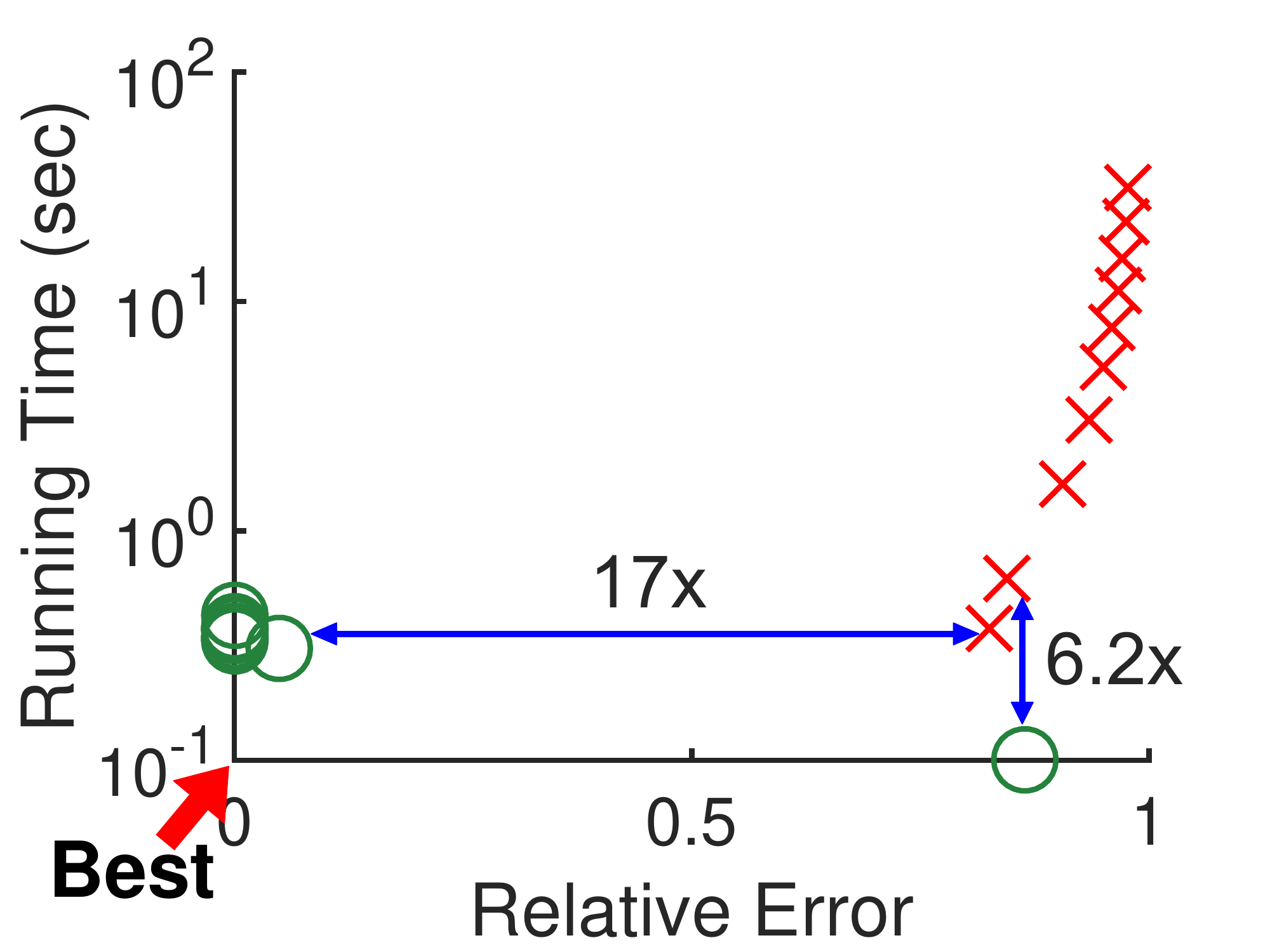}
}
\subfloat[\textbf{Manufacturing emails}]
{	\includegraphics[width=0.23 \textwidth]{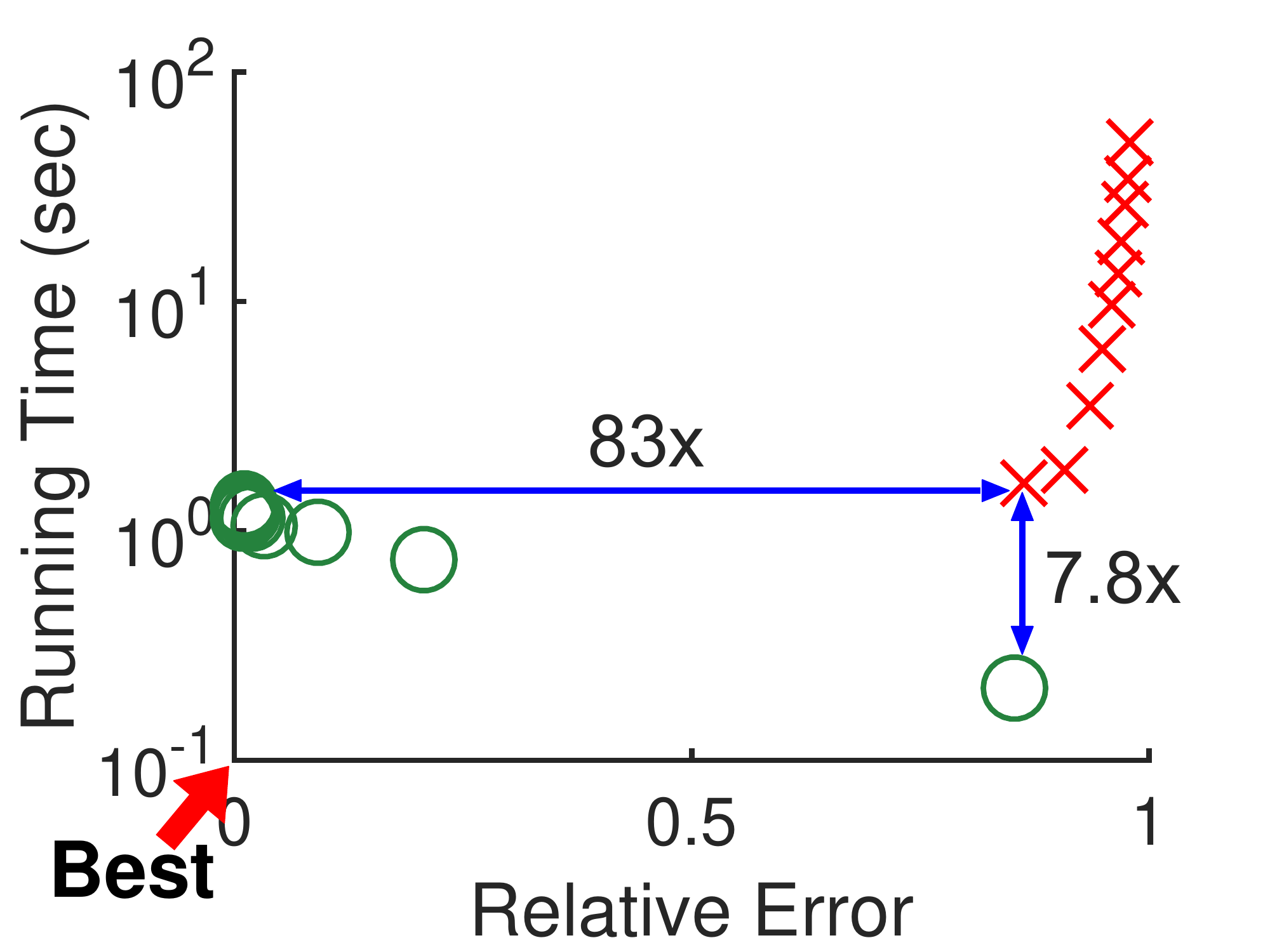}
}
\subfloat[\textbf{Infectious}]
{	\includegraphics[width=0.23 \textwidth]{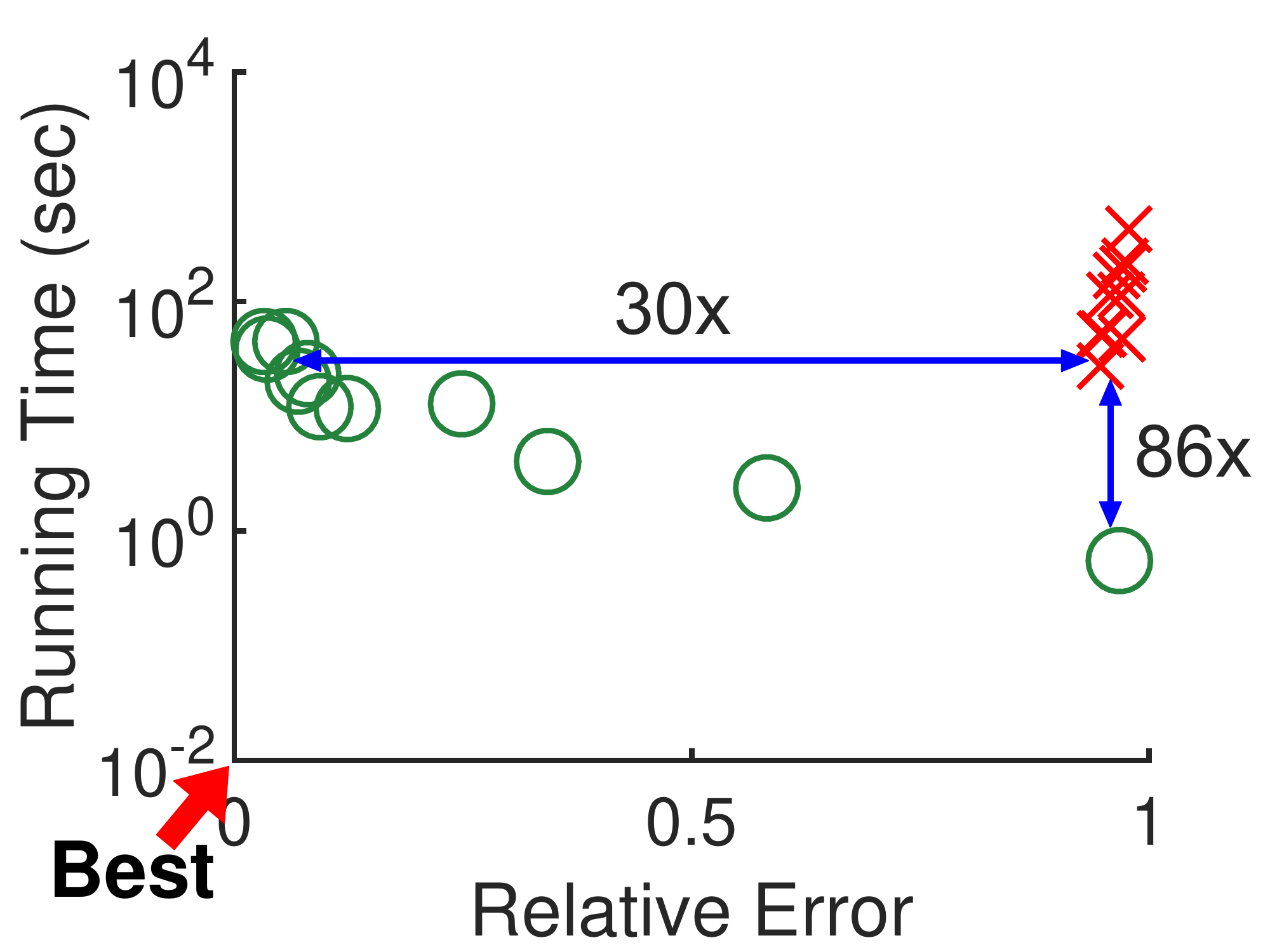}
}
\vspace*{-0.4cm}\\
\subfloat[\textbf{Hypertext 2009}]
{	\includegraphics[width=0.23 \textwidth]{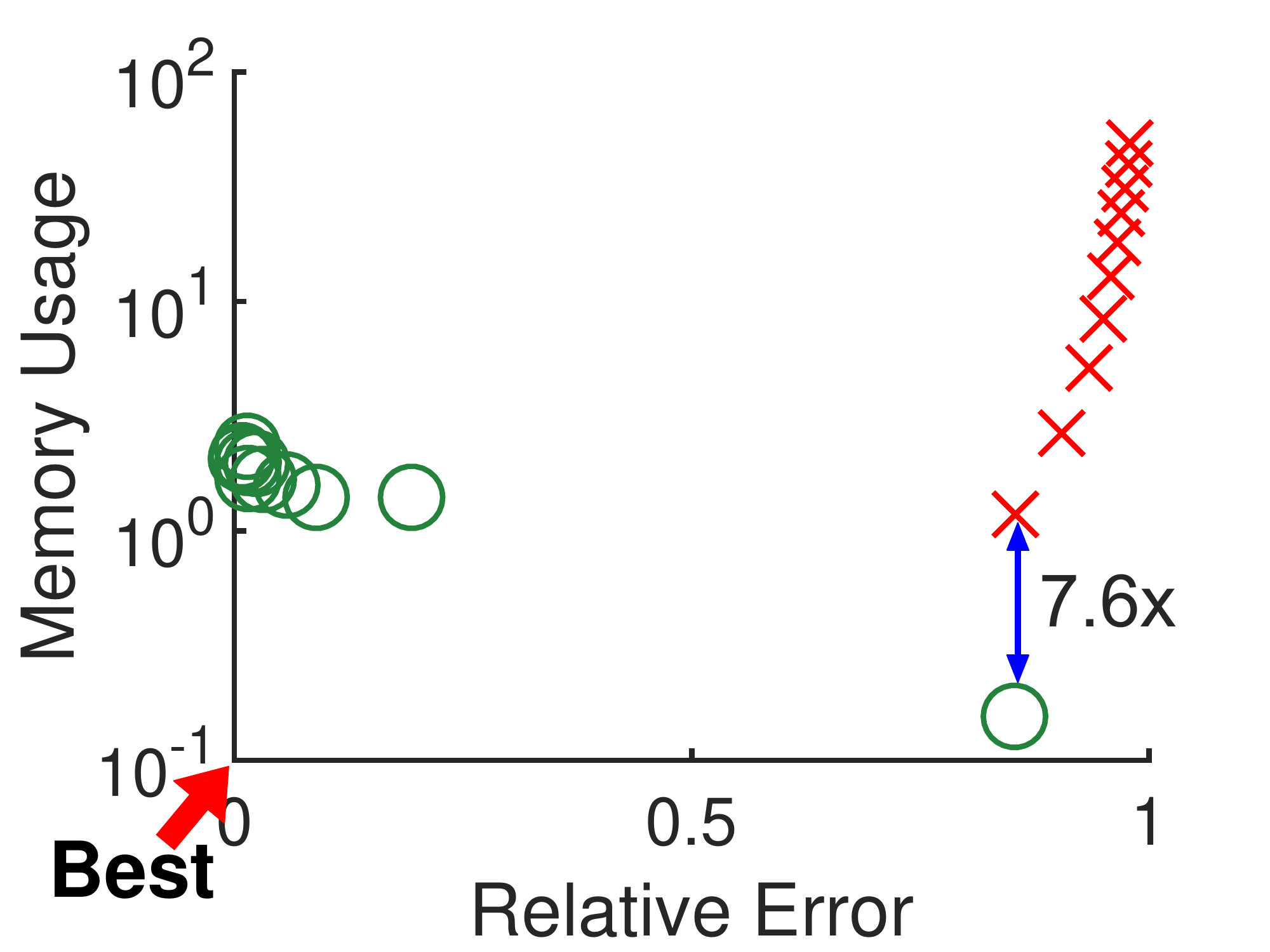}
}
\subfloat[\textbf{Haggle}]
{	\includegraphics[width=0.23 \textwidth]{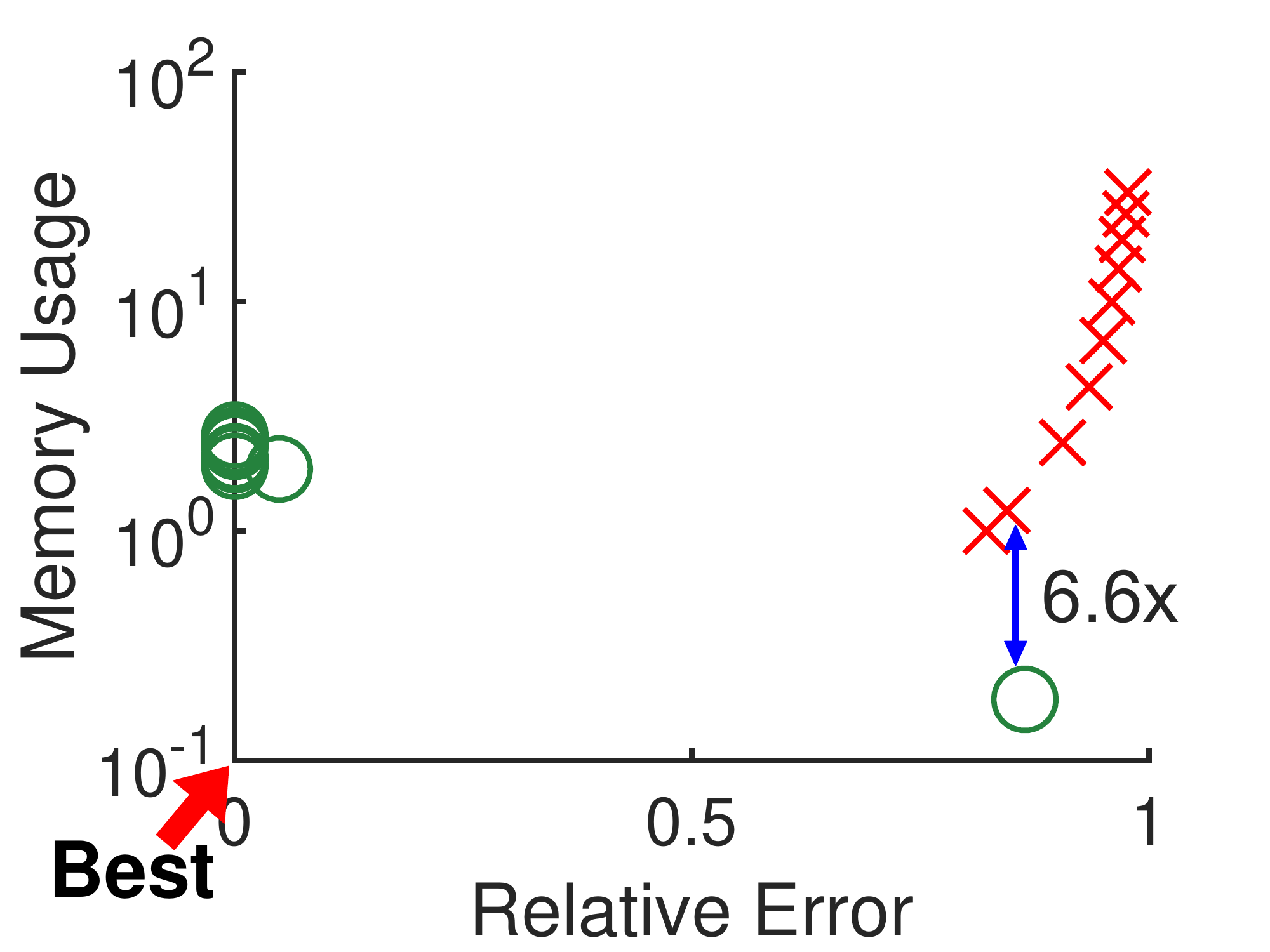}
}
\subfloat[\textbf{Manufacturing emails}]
{	\includegraphics[width=0.23 \textwidth]{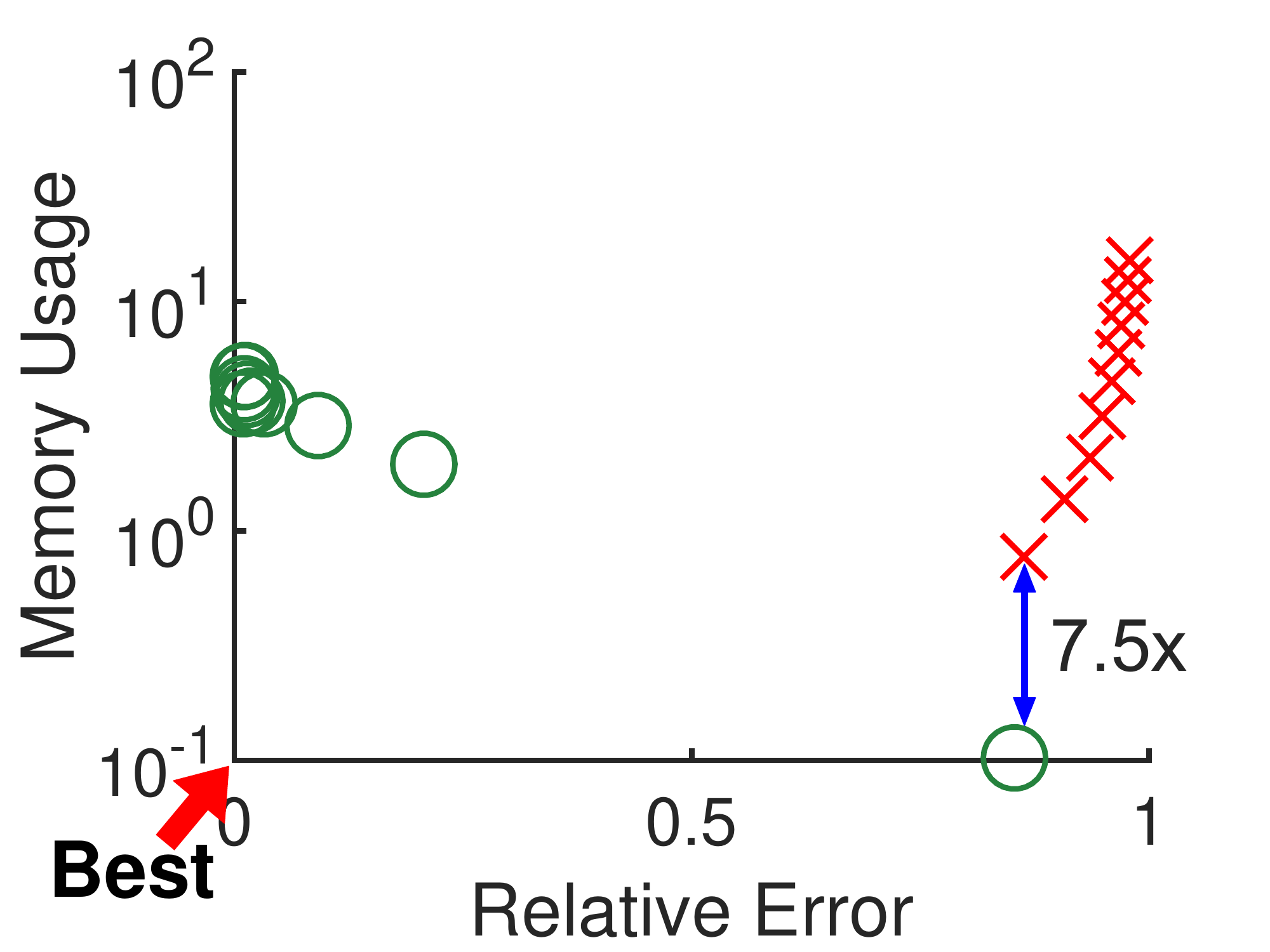}
}
\subfloat[\textbf{Infectious}]
{	\includegraphics[width=0.23 \textwidth]{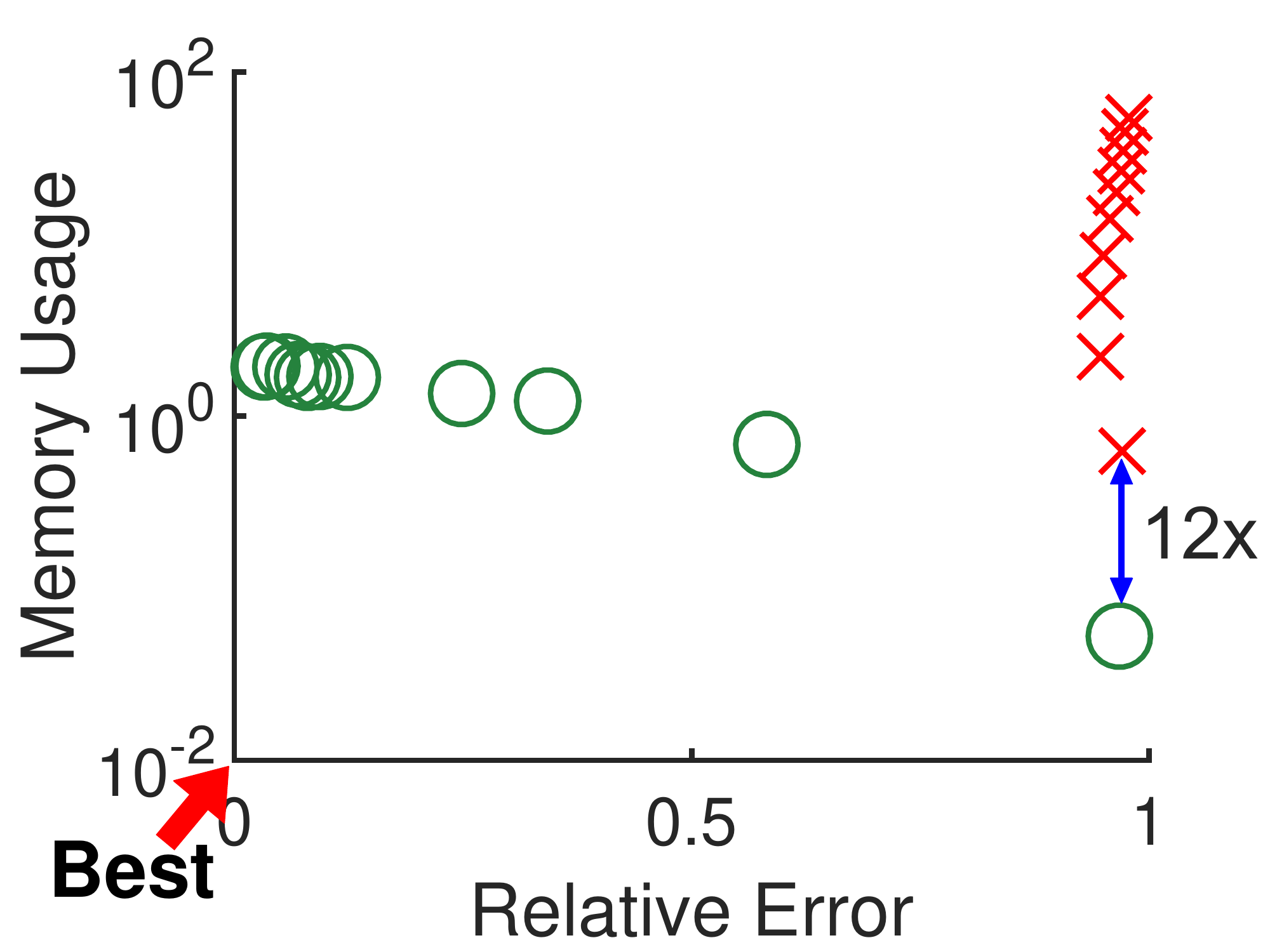}
}

\includegraphics[width=0.35 \textwidth]{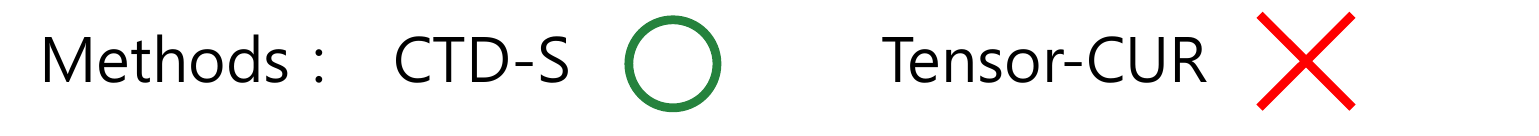}
\caption{Error, running time, and memory usage of CTD-S compared to those of tensor-CUR.  CTD-S is more accurate, faster and more memory-efficient than tensor-CUR.}
\label{fig:CTD-S all}
\end{figure*}

In this paper, we propose CTD (Compact Tensor Decomposition), a fast, accurate, and interpretable sampling-based tensor decomposition method.
CTD has two versions: CTD-S for static tensors, and CTD-D for dynamic tensors.
CTD-S is optimal after sampling, and results in a compact tensor decomposition through careful sampling and redundancy elimination, thereby providing much better running time and memory efficiency than previous methods.
CTD-D, the first sampling-based dynamic tensor decomposition method in literature, updates and modifies minimally on the components altered by the incoming data, making the method applicable for real-time analysis on a dynamic environment.
Table \ref{tab:comparison} shows the comparison of CTD and the existing method, tensor-CUR.
Our main contributions are as follows:
\bit
\item \textbf{Method.} We propose CTD, a fast, accurate, and directly interpretable tensor decomposition method. We prove the optimality of the static method CTD-S which makes it more accurate than the state-of-the-art method. Also, to the best of our knowledge, the dynamic method CTD-D is the first sampling-based dynamic tensor decomposition method.
\item \textbf{Performance.} CTD-S is 17$\sim$83$\times$ more accurate, 5$\sim$86$\times$ faster, and 7$\sim$12$\times$ more memory-efficient compared to tensor-CUR, the state-of-the-art competitor as shown in Figure \ref{fig:CTD-S all}.
CTD-D is up to $3 \times$ faster than CTD-S as shown in Figure \ref{fig:CTD-D all}.
\item \textbf{Interpretable Analysis.} We show how CTD results are directly interpreted to successfully detect DDoS attacks in network traffic data.
\eit

The code for CTD and datasets are available at \url{https://datalab.snu.ac.kr/CTD/}. The rest of this paper is organized as follows. Section \ref{sec:prelim} describes preliminaries and related works for tensor and sampling-based decomposition. Section \ref{sec:proposed} describes our proposed method CTD. Section \ref{sec:experiments} presents the experimental results. After presenting CTD at work in Section \ref{sec:ctd at works},
we conclude in Section \ref{sec:conclusions}. 

\section{Preliminaries and Related Works}
\label{sec:prelim}
In this section, we describe preliminaries and related works for tensor and sampling-based decompositions. Table \ref{tab:symbol_table} lists the definitions of symbols used in this paper.

\subsection{Tensor}
A tensor is a multi-dimensional array and is denoted by the boldface Euler script, e.g. $\T{X} \in \mathbb{R}^{I_1 \times \cdots \times I_N}$ where $N$ denotes the order (the number of axes) of $\T{X}$. Each axis of a tensor is also known as mode or way.
A fiber is a vector (1-mode tensor) which has fixed indices except one. Every index of a mode-$n$ fiber is fixed except $n$-th index. A fiber can be regarded as a higher-order version of a matrix row and column. A matrix column and row each correspond to mode-$1$ fiber and mode-$2$ fiber, respectively.
A slab is an $(N - 1)$-mode tensor which has only one fixed index.
$\mathbf{X}_{(\alpha)} \in \mathbb{R}^{I_\alpha \times N_\alpha}$ denotes a mode-$\alpha$ matricization of $\T{X}$, where $N_\alpha = \prod_{n \neq \alpha} I_n$. $\mathbf{X}_{(\alpha)}$ is made by rearranging mode-$\alpha$ fibers of $\T{X}$ to be the columns of $\mathbf{X}_{(\alpha)}$. $\Vert\T{X} \Vert_F$ is the Frobenius norm of $\T{X}$ and is defined by Equation \ref{eq:Frobenius norm of tensor}.
\begin{equation} \label{eq:Frobenius norm of tensor}
\Vert\T{X} \Vert_F^2 = \sum_{i_1, i_2, \cdots , i_N} x_{i_1 i_2 \cdots i_N}^2
\end{equation}
$\T{X} \times_n \mathbf{U} \in \mathbb{R}^{I_1 \times \cdots \times I_{n-1} \times J \times I_{n+1} \times \cdots \times I_N}$ denotes the $n$-mode product of a tensor $\T{X} \in \mathbb{R}^{I_1 \times \cdots \times I_N}$ with a matrix $\mathbf{U} \in \mathbb{R}^{J \times 	I_n}$. Elementwise,
\begin{equation} \label{eq:n_mode_product_elementwise}
(\T{X} \times_n \mathbf{U})_{i_1 \cdots i_{n-1} j i_{n+1} \cdots i_N} = \sum_{i_n = 1}^{I_n} x_{i_1 \cdots i_{n-1} i_n i_{n+1} \cdots i_N} u_{j i_n}
\end{equation}
$\T{X} \times_n \mathbf{U}$ has a property shown in Equation \ref{eq:n_mode_product_property}.
\begin{equation}\label{eq:n_mode_product_property}
	\T{Y} = \T{X} \times_n \mathbf{U} \Leftrightarrow \mathbf{Y}_{(n)} = \mathbf{U} \mathbf{X}_{(n)}
\end{equation}

We assume that a matrix or tensor is stored in a sparse-unordered representation (i.e. only nonzero entries are stored in a form of pair of indices and the corresponding value). $nnz(\T{X})$ denotes the number of nonzero elements in $\T{X}$.

We describe existing sampling-based matrix and tensor decomposition methods in the following subsections.

\begin{table}[t]
	\small
	\caption{Table of symbols.}
	\begin{tabular}{cl}
		\toprule
		\textbf{Symbol} & \textbf{Definition} \\
		\midrule
		$\T{X}$ & tensor (Euler script, bold letter) \\
		$\mathbf{X}$ & matrix (uppercase, bold letter) \\
		$\mathbf{x}$ & column vector (lower case, bold letter)\\
		$x$ & scalar (lower case, italic letter)\\
		$\mathbf{X}_{(n)}$ & mode-$n$ matricization of a tensor $\T{X}$\\
		$\mathbf{X}^\dagger$ & pseudoinverse of $\mathbf{X}$\\
		$N$ & order (number of modes) of a tensor\\
		$\times_n$ & $n$-mode product \\
		$\Vert \bullet \Vert_F$ & Frobenius norm\\
		$nnz(\T{X})$ & number of nonzero elements in $\T{X}$ \\
		\bottomrule
	\end{tabular}
	
	\label{tab:symbol_table}
\end{table}

\subsection{Sampling Based Matrix Decomposition}
Sampling-based matrix decomposition methods sample columns or rows from a given matrix and use them to make their factors.
They produce directly interpretable factors which preserve sparsity since those factors directly reflect the sparsity of the original data.
In contrast, a singular value decomposition (SVD) generates factors which are hard to understand and dense because the factors are in a form of linear combination of columns or rows from the given matrix.
Definition \ref{def:CX matrix decomposition} shows the definition for CX matrix decomposition \cite{drineas2008relative}, a kind of sampling-based matrix decomposition.
\begin{definition}\label{def:CX matrix decomposition}
	Given a matrix $\mathbf{A} \in \mathbb{R}^{m \times n}$, the matrix $\tilde{\mathbf{A}} = \mathbf{C} \mathbf{X}$ is a CX matrix decomposition of $\mathbf{A}$, where a matrix $\mathbf{C} \in \mathbb{R}^{m \times c}$ consists of actual columns of $\mathbf{A}$ and a matrix $\mathbf{X}$ is any matrix of size $c \times n$.
\end{definition}
We introduce well-known CX matrix decomposition methods: LinearTimeCUR, CMD, and Colibri.

\paragraph{LinearTimeCUR and CMD}
Drineas et al. \cite{drineas2006fast} proposed LinearTimeCUR and Sun et al. \cite{sun2007less} proposed CMD.
In the initial step, LinearTimeCUR and CMD sample columns from an original matrix $\mathbf{A}$ according to the probabilities proportional to the norm of each column with replacement. Drineas et al. \cite{drineas2006fast} has proven that this biased sampling provides an optimal approximation.
Then, they project $\mathbf{A}$ into the column space spanned by those sampled columns and use the projection as the low-rank approximation of $\mathbf{A}$.
LinearTimeCUR has many duplicates in its factors because a column or row with a higher norm is likely to be selected multiple times. These duplicates make LinearTimeCUR slow and require a large amount of memory.
CMD handles the duplication issue by removing duplicate columns and rows in the factors of LinearTimeCUR, thereby reducing running time and memory significantly.

\paragraph{Colibri}
Tong et al. \cite{tong2008colibri} proposed Colibri-S which improves CMD by removing all types of linear dependencies including duplicates.
Colibri-S is much faster and memory-efficient compared to LinearTimeCUR and CMD because the dimension of factors is much smaller than that of LinearTimeCUR and CMD.
Tong et al. \cite{tong2008colibri} also proposed the dynamic version Colibri-D. Although Colibri-D can update its factors incrementally, it fixes the indices of the initially sampled columns which need to be updated over time. Our CTD-D not only handles general dynamic tensors but also does not have to fix those indices.

\subsection{Sampling Based Tensor Decomposition}
Sampling-based tensor decomposition method samples actual fibers or slabs from an original tensor. In contrast to PARAFAC and Tucker, the most famous tensor decomposition methods, the resulting factors of sampling-based tensor decomposition method are easy to understand and usually sparse.
There are two types of sampling based tensor decomposition: one based on Tucker and the other based on LR tensor decomposition which is defined in Definition 2.2. 
In Tucker-type sampling based tensor decomposition (e.g., ApproxTensorSVD \cite{drineas2007randomized} and FBTD (fiber-based tensor decomposition)~\cite{caiafa2010generalizing}),
factor matrices for all modes are either sampled or generated; the overhead of generating a factor matrix for each mode makes these methods too slow for applications to real-time analysis. 
We focus on sampling methods based on LR tensor decomposition which is faster than those based on Tucker decomposition. 

\begin{definition}\label{def:LR tensor decomposition}
(LR tensor decomposition)
	Given a tensor $\T{X} \in \mathbb{R}^{\mathit{I}_1 \times \mathit{I}_2 \times \cdots \times \mathit{I}_N}$,
$\tilde{\T{X}} = \T{L} \times_\alpha \mathbf{R}$ is a mode-$\alpha$ LR tensor decomposition of $\T{X}$, where a matrix $\mathbf{R} \in \mathbb{R}^{I_\alpha \times c}$ consists of actual mode-$\alpha$ fibers of $\T{X}$ and a tensor $\T{L}$ is any tensor of size $I_1 \times \cdots \times I_{\alpha - 1} \times  c \times I_{\alpha + 1} \times \cdots \times I_N$.
\end{definition}


\paragraph{Tensor-CUR}
Mahoney et al. \cite{mahoney2008tensor} proposed tensor-CUR, a mode-$\alpha$ LR tensor decomposition method. Tensor-CUR is an $n$-dimensional extension of LinearTimeCUR. Tensor-CUR samples fibers and slabs from an original tensor and builds its factors using the sampled ones. The only difference between LinearTimeCUR and tensor-CUR is that tensor-CUR exploits fibers and slabs instead of columns and rows. Thus, tensor-CUR has drawbacks similar to those of LinearTimeCUR. Tensor-CUR has many redundant fibers in its factors and these fibers make tensor-CUR slow and use a large amount of memory.

\section{Proposed Method}
\label{sec:proposed}
In this section, we describe our proposed CTD (Compact Tensor Decomposition), an efficient and interpretable sampling-based tensor decomposition method.
We first describe the static version CTD-S, and then the dynamic version CTD-D of CTD.
\subsection{CTD-S for Static Tensors}

\paragraph{Overview.}
How can we design an efficient sampling-based static tensor decomposition method?
Tensor-CUR, the existing state-of-the-art, has many redundant fibers in its factors and these fibers make tensor-CUR slow and use large memory.
Our proposed CTD-S method removes all dependencies from the sampled fibers
and maintains only independent fibers; thus, CTD-S is faster and more memory-efficient than tensor-CUR.

\begin{figure} [h]
	\begin{center}
		\includegraphics[width=0.45 \textwidth]{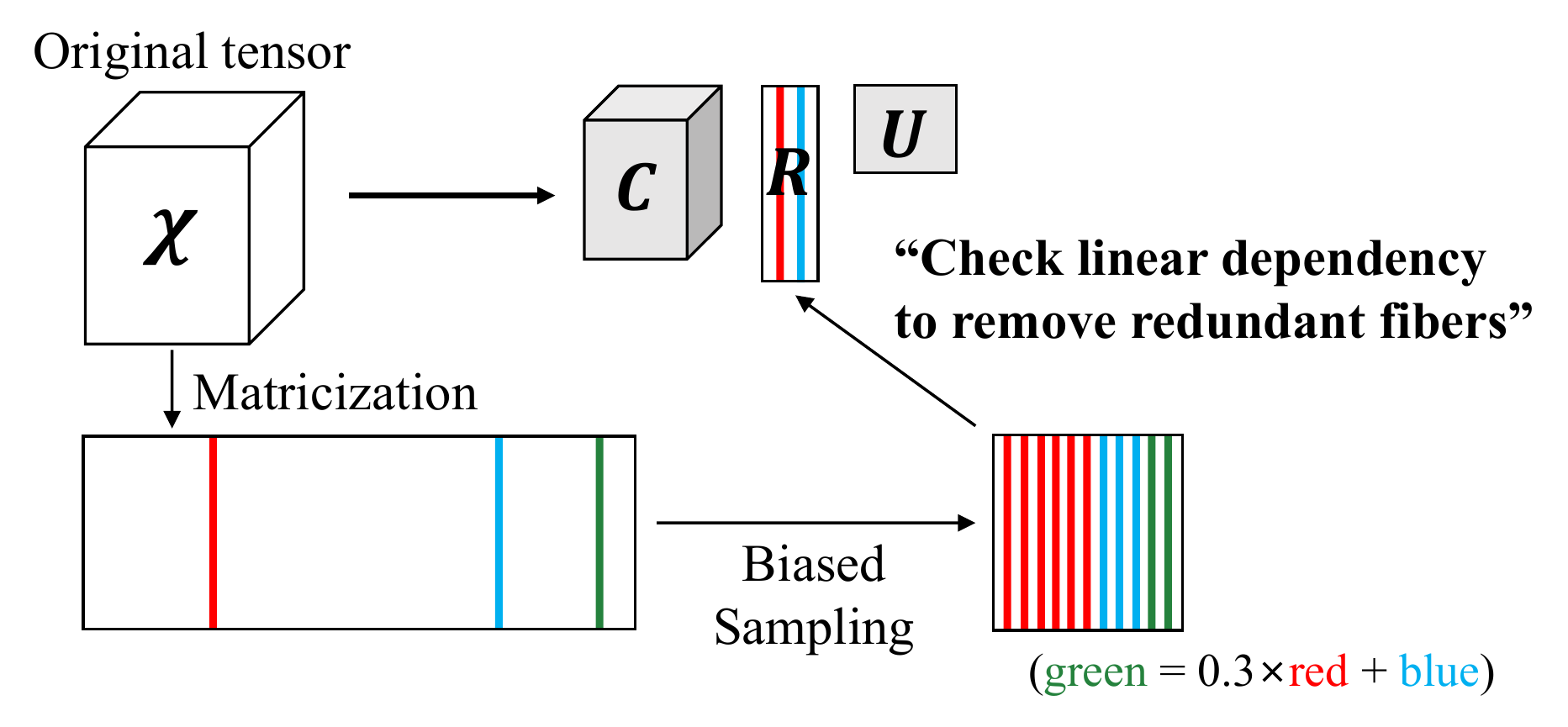}
	\end{center}
	\caption{The scheme for CTD-S.}
	\label{fig:CTD-S-overview}
\end{figure}
\paragraph{Algorithm.}
Figure \ref{fig:CTD-S-overview} shows the scheme for CTD-S. CTD-S first samples fibers biased toward a norm of each fiber. Three different fibers (red, blue, green) are sampled in Figure \ref{fig:CTD-S-overview}. There are many duplicates after biased sampling process since CTD-S samples fibers multiple times with replacement and a fiber with a higher norm is likely to be sampled many times. There also exist linearly dependent fibers such as the green fiber which can be expressed as a linear combination of the red one and the blue one. Those linearly dependent fibers including duplicates are redundant in that they do not give new information when interpreting the result. CTD-S removes those redundant fibers and stores only the independent fibers in its factor $\mathbf{R}$ to keep result compact. CTD-S only keeps one red fiber and one blue fiber in $\mathbf{R}$ in Figure \ref{fig:CTD-S-overview}.

CTD-S decomposes a tensor $\T{X} \in \mathbb{R}^{\mathit{I}_1 \times \mathit{I}_2 \times \cdots \times \mathit{I}_N}$ into one tensor $\T{C} \in \mathbb{R}^{\mathit{I}_1 \times \cdots \times \mathit{I}_{\alpha - 1} \times  \tilde{s} \times \mathit{I}_{\alpha + 1} \times \cdots \times \mathit{I}_N}$ , and two matrices $\mathbf{U} \in \mathbb{R}^{\tilde{s} \times \tilde{s}}$ and $\mathbf{R} \in \mathbb{R}^{\mathit{I}_{\alpha} \times \tilde{s}}$ such that $\T{X} \approx \T{C}\times_\alpha \mathbf{R}\mathbf{U}$. CTD-S is a mode-$\alpha$ LR tensor decomposition method and is interpretable since $\mathbf{R}$ consists of 	 independent fibers sampled from $\T{X}$.

\begin{algorithm} [h]
	\small
	\caption{CTD-S for Static Tensor} \label{alg:CTD-S}
	\begin{algorithmic}[1]
		\REQUIRE Tensor $\T{X} \in \mathbb{R}^{\mathit{I}_1 \times \mathit{I}_2 \times \cdots \times \mathit{I}_N}$, mode $\alpha \in \{1, \cdots ,N\}$, sample size $s \in \{1, \cdots ,N_{\alpha}\}$, and tolerance $\epsilon$
		
		\ENSURE $\T{C} \in \mathbb{R}^{\mathit{I}_1 \times \cdots \times \mathit{I}_{\alpha - 1} \times \tilde{s} \times \mathit{I}_{\alpha + 1} \times \cdots \times \mathit{I}_N}$, $\mathbf{U} \in \mathbb{R}^{\tilde{s} \times \tilde{s}}$, $\mathbf{R} \in \mathbb{R}^{\mathit{I}_{\alpha} \times \tilde{s}}$
		
		\STATE Let $\mathbf{X}_{(\alpha)}$ be the mode-$\alpha$ matricization of $\T{X}$
		
		\STATE Compute column distribution for $i = 1, \cdots , N_\alpha$: \\
		$P(i) \leftarrow \frac{ \vert \mathbf{X}_{(\alpha)}(:,i) \vert^2 }{\Vert \mathbf{X}_{(\alpha)} \Vert_F^2}$
		
		\STATE Sample $s$ columns from $\mathbf{X}_{(\alpha)}$ based on $P(i)$. Let $I = \{i_1, \cdots , i_s\}$
		
		\STATE Let $I' = \{i'_1, \cdots , i'_{s'}\}$ be a set consisting of unique elements in $I$
		
		\STATE Initialize $\mathbf{R} \leftarrow [\mathbf{X}_{(\alpha)}(:,i'_1)]$ and $\mathbf{U} \leftarrow 1/(\mathbf{X}_{(\alpha)}(:,i'_1)^T\mathbf{X}_{(\alpha)}(:,i'_1))$
		
		\FOR{$k = 2 : s'$}
		
		\STATE Compute the residual: \\
		$\overrightarrow{res} \leftarrow (\mathbf{X}_{(\alpha)}(:,i'_k) - \mathbf{R}\mathbf{U}\mathbf{R}^T\mathbf{X}_{(\alpha)}(:,i'_k))$
		
		\IF {$||\overrightarrow{res}|| \leq \epsilon ||\mathbf{X}_{(\alpha)}(:,i'_k)||$}
		
		\STATE continue
		
		\ELSE
		
		\STATE Compute: $\delta \leftarrow ||\overrightarrow{res}||^2$ and $\overrightarrow{\mathbf{y}} \leftarrow \mathbf{U}\mathbf{R}^T\mathbf{X}_{(\alpha)}(:, i'_k)$
		
		\STATE Update $\mathbf{U}$:\\
		$\mathbf{U} \leftarrow $
		$\begin{pmatrix}
		\mathbf{U} + \overrightarrow{\mathbf{y}} \overrightarrow{\mathbf{y}}^T / \delta & - \overrightarrow{\mathbf{y}} / \delta \\
		- \overrightarrow{\mathbf{y}}^T / \delta & 1 / \delta \\
		\end{pmatrix}$
		
		\STATE Expand $\mathbf{R}: \mathbf{R} \leftarrow [\mathbf{R}, \mathbf{X}_{(\alpha)}(:,i'_k)]$
		
		\ENDIF
		
		\ENDFOR
		
		\STATE Compute $\T{C} \leftarrow \T{X} \times_\alpha \mathbf{R}^T$
		
		\RETURN $\T{C}$, $\mathbf{U}$, $\mathbf{R}$
	\end{algorithmic}
\end{algorithm}
Algorithm \ref{alg:CTD-S} shows the procedure of CTD-S. First, CTD-S computes the probabilities of mode-$\alpha$ fibers of $\T{X}$, which are proportional to the norm of each fiber, and then samples $s$ fibers from $\T{X}$ according to the probabilities with replacement, in lines 1-3. Redundant fibers exist in the sampled fibers in this step. CTD-S selects unique fibers from the initially sampled $s$ fibers in line 4 where $s'$ denotes the number of those unique fibers. This step reduces the number of iterations in lines 6-15 from $s-1$ to $s'-1$. $\mathbf{R}$ is initialized by the first sampled fiber in line 5. In lines 6-15, CTD-S removes redundant mode-$\alpha$ fibers in the sampled fibers. The matrices $\mathbf{U}$ and $\mathbf{R}$ are computed incrementally in this step. The columns of $\mathbf{R}$ always consist of independent mode-$\alpha$ fibers through the loop. In each iteration, CTD-S checks whether one of the sampled fibers is linearly independent of the column space spanned by $\mathbf{R}$ or not in lines 7-8. If the fiber is independent, CTD-S updates $\mathbf{U}$ and expands  $\mathbf{R}$ with the fiber in lines 10-13. Finally, CTD-S computes $\T{C}$ with $\T{X}$ and $\mathbf{R}$ in line 16.

Lemma \ref{lemma: CTD-S complexity} shows the computational cost of CTD-S.
\begin{lemma} \label{lemma: CTD-S complexity}
	The computational complexity of CTD-S is  $\mathcal{O}((\tilde{s} I_\alpha + s) N_\alpha + s' (\tilde{s}^2 + nnz(\mathbf{R})) + s \log s + nnz(\T{X}))$, where $N_\alpha$ is $\prod_{n \neq \alpha} I_n$ and $\tilde{s} \ll s' \leq s$.
\end{lemma}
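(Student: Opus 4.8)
The plan is to charge the running time of Algorithm~\ref{alg:CTD-S} line by line, separating the one-time work (lines 1--5 and 16) from the incremental loop (lines 6--15), and then to absorb the dominated terms. Throughout I use three facts: a sparse matrix times a sparse vector $\mathbf{A}^T\mathbf{x}$ costs $\mathcal{O}(nnz(\mathbf{A}) + nnz(\mathbf{x}))$; a dense $t\times t$ matrix times a $t$-vector costs $\mathcal{O}(t^2)$; and at every step of the algorithm the current $\mathbf{R}$ has at most $\tilde{s}$ columns, so $nnz(\mathbf{R})$ grows monotonically and is bounded by its value for the returned $\mathbf{R}$. Write $\mathbf{x}_k := \mathbf{X}_{(\alpha)}(:,i'_k)$ for the $k$-th distinct sampled fiber.

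\emph{One-time work.} Line~1 reads $\T{X}$ and materializes $\mathbf{X}_{(\alpha)}$ in the sparse representation, $\mathcal{O}(nnz(\T{X}))$. Line~2 accumulates the $N_\alpha$ squared column norms in a single sweep over the nonzeros and normalizes, $\mathcal{O}(nnz(\T{X}) + N_\alpha)$. Line~3 draws $s$ indices from a distribution on $N_\alpha$ atoms; bounding each draw by a linear scan of the cumulative distribution gives $\mathcal{O}(sN_\alpha)$. Line~4 deduplicates the length-$s$ multiset by sorting, $\mathcal{O}(s\log s)$, and line~5 is $\mathcal{O}(nnz(\T{X}))$. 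Line~16 evaluates $\T{C} = \T{X}\times_\alpha\mathbf{R}^T$ via $\mathbf{C}_{(\alpha)} = \mathbf{R}^T\mathbf{X}_{(\alpha)}$ (Equation~\ref{eq:n_mode_product_property}); the output is a $\tilde{s}\times N_\alpha$ array and the product costs $\mathcal{O}(\tilde{s}N_\alpha + \tilde{s}\,nnz(\T{X}))$, which lies within $\mathcal{O}(\tilde{s}I_\alpha N_\alpha)$ since $nnz(\T{X})\le I_\alpha N_\alpha$. Together these account for the $(\tilde{s}I_\alpha + s)N_\alpha + s\log s + nnz(\T{X})$ portion of the bound (using $N_\alpha\le sN_\alpha$).

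\emph{The loop.} There are $s'-1$ iterations. In iteration $k$, line~7 computes $\mathbf{R}^T\mathbf{x}_k$ in $\mathcal{O}(nnz(\mathbf{R}) + nnz(\mathbf{x}_k))$, then applies the dense $\mathbf{U}$ in $\mathcal{O}(\tilde{s}^2)$, then applies $\mathbf{R}$ again in $\mathcal{O}(nnz(\mathbf{R}))$, then subtracts the result from $\mathbf{x}_k$; since $\mathbf{R}\mathbf{U}\mathbf{R}^T\mathbf{x}_k$ is a linear combination of the columns of $\mathbf{R}$, its support, hence that of $\overrightarrow{res}$, is contained in the union of the supports of the columns of $\mathbf{R}$ and of $\mathbf{x}_k$, so lines~7, 8 and 11 cost $\mathcal{O}(\tilde{s}^2 + nnz(\mathbf{R}) + nnz(\mathbf{x}_k))$ per iteration. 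The rank-one update in line~12 (building an $\mathcal{O}(\tilde{s}^2)$-size matrix) and the append in line~13 are executed only when a fiber is accepted, i.e.\ exactly $\tilde{s}-1$ times, contributing $\mathcal{O}(\tilde{s}^3)\subseteq\mathcal{O}(s'\tilde{s}^2)$ and $\mathcal{O}(nnz(\mathbf{R}))$ in total. Summing over $k$, and using that the $i'_k$ index distinct columns of $\mathbf{X}_{(\alpha)}$ so that $\sum_k nnz(\mathbf{x}_k)\le nnz(\T{X})$, the loop costs $\mathcal{O}\big(s'(\tilde{s}^2 + nnz(\mathbf{R})) + nnz(\T{X})\big)$. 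Adding the one-time work would give the stated complexity; note the bound holds for all $\tilde{s}\le s'\le s$.

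The delicate step is the sparse-arithmetic accounting inside the loop: one must verify that the projection and residual vectors never carry more than $\mathcal{O}(nnz(\mathbf{R}) + nnz(\mathbf{x}_k))$ nonzeros, since otherwise a single iteration could cost $\Theta(I_\alpha)$ and an $I_\alpha$ factor would leak into the loop term, and that the quadratic-in-$\tilde{s}$ work per iteration genuinely stays $\mathcal{O}(\tilde{s}^2)$, i.e.\ the $I_\alpha\times I_\alpha$ operator $\mathbf{R}\mathbf{U}\mathbf{R}^T$ is never formed but always applied as three separate factors in the order $\mathbf{R}^T$, then $\mathbf{U}$, then $\mathbf{R}$. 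A minor but necessary bookkeeping point is to amortize the two kinds of per-iteration work separately: the residual test in lines 7--8 runs $s'-1$ times, while the $\mathbf{U}$ and $\mathbf{R}$ expansions run only $\tilde{s}-1$ times, and conflating them would blur the distinction between $s'\tilde{s}^2$ and $\tilde{s}^3$. The remaining summations are routine.
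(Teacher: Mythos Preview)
Your proof is correct and follows the same line-by-line charging scheme as the paper's own proof. The only difference is that the paper defers the $\mathcal{O}(s'(\tilde{s}^2 + nnz(\mathbf{R})))$ cost of the loop in lines 5--15 to Lemma~1 of the Colibri paper~\cite{tong2008colibri}, whereas you derive that bound explicitly via the sparse-arithmetic and support arguments; your version is therefore more self-contained, but otherwise identical in structure and in the per-line estimates.
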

\begin{proof}
	 The mode-$\alpha$ matricization of $\T{X}$ in line 1 needs $\mathcal{O}(nnz(\T{X}))$ operations.
	 Computing column distribution in line 2 takes $\mathcal{O}(nnz(\T{X}) + N_\alpha)$ and sampling $s$ columns in line 3 takes $\mathcal{O}(sN_\alpha)$.
	 $\mathcal{O}(s \log s)$ operation is required in computing unique elements in $I$ in line 4.
	 Computing $\mathbf{R}$ and $\mathbf{U}$ in lines 5-15 takes $\mathcal{O}(s' (\tilde{s}^2 + nnz(\mathbf{R})))$ as proved in Lemma 1 in \cite{tong2008colibri}.
	 Computing $\T{C}$ in line 16 takes $\mathcal{O}(\tilde{s} I_\alpha N_\alpha)$.
	 Overall, CTD-S needs $\mathcal{O}((\tilde{s} I_\alpha + s) N_\alpha + s' (\tilde{s}^2 + nnz(\mathbf{R})) + s \log s + nnz(\T{X}))$ operations.\hfill
\end{proof}

Lemma \ref{lemma: CTD-S accuracy} shows that CTD-S has the optimal accuracy for given sampled fibers, and thus is more accurate than tensor-CUR. 

\begin{lemma} \label{lemma: CTD-S accuracy}
	CTD-S is more accurate than tensor-CUR. For a given $\mathbf{R}_0$ consisting of initially sampled fibers, CTD-S has the minimum error.
\end{lemma}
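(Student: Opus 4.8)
The plan is to decompose the claim into two parts: (1) CTD-S achieves the optimal (least-squares) approximation of $\T{X}$ given the column space spanned by the retained fibers $\mathbf{R}$, and (2) this optimum is at least as good as the approximation produced by tensor-CUR, which uses the same (or a superset of the) sampled fibers but a non-optimal mixing. First I would set up the error functional. Since CTD-S is a mode-$\alpha$ LR tensor decomposition with $\tilde{\T{X}} = \T{C} \times_\alpha \mathbf{R}\mathbf{U}$ and $\T{C} = \T{X} \times_\alpha \mathbf{R}^T$, I would use the matricization identity (Equation \ref{eq:n_mode_product_property}) to rewrite $\Vert \T{X} - \tilde{\T{X}} \Vert_F = \Vert \mathbf{X}_{(\alpha)} - \mathbf{R}\mathbf{U}\mathbf{R}^T \mathbf{X}_{(\alpha)} \Vert_F$. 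So the whole problem reduces to a matrix statement: among all matrices $\mathbf{M}$ of the appropriate size, $\mathbf{R}\mathbf{M}\mathbf{X}_{(\alpha)}$ best approximates $\mathbf{X}_{(\alpha)}$ in Frobenius norm exactly when $\mathbf{R}\mathbf{M}$ is the orthogonal projector onto the column space of $\mathbf{R}$.

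Next I would verify that the $\mathbf{U}$ built incrementally by Algorithm \ref{alg:CTD-S} equals $(\mathbf{R}^T\mathbf{R})^{-1}$, so that $\mathbf{R}\mathbf{U}\mathbf{R}^T = \mathbf{R}(\mathbf{R}^T\mathbf{R})^{-1}\mathbf{R}^T$ is precisely the orthogonal projector $\mathbf{P}_{\mathbf{R}}$ onto $\mathrm{col}(\mathbf{R})$. This is where I would lean on the Colibri analysis: the rank-one update of $\mathbf{U}$ in line 12 is the standard block-inverse (Schur-complement) formula for $(\mathbf{R}^T\mathbf{R})^{-1}$ when a new independent column is appended, with $\delta = \Vert \overrightarrow{res} \Vert^2$ being the squared norm of the residual of the new fiber against the current span. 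Since the loop only appends fibers whose residual exceeds the tolerance, $\mathbf{R}$ always has full column rank and the inverse is well-defined; this also justifies $\tilde s \le s' \le s$. Then, by the classical least-squares fact, for \emph{any} mixing matrix $\mathbf{U}'$ we have $\Vert \mathbf{X}_{(\alpha)} - \mathbf{R}\mathbf{U}'\mathbf{R}^T\mathbf{X}_{(\alpha)}\Vert_F \ge \Vert \mathbf{X}_{(\alpha)} - \mathbf{P}_{\mathbf{R}}\mathbf{X}_{(\alpha)}\Vert_F$, with equality for CTD-S's choice; this gives the "minimum error for a given $\mathbf{R}_0$" half of the statement, noting that $\mathrm{col}(\mathbf{R}) = \mathrm{col}(\mathbf{R}_0)$ since removing linearly dependent fibers does not change the span.

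For the comparison with tensor-CUR, I would argue that tensor-CUR forms its LR approximation by projecting onto a space spanned by a subset of the same norm-sampled fibers but weights them by a scheme that is not the orthogonal projector (it inherits LinearTimeCUR's biased-sampling reconstruction rather than a least-squares fit). Since tensor-CUR's reconstruction lies in $\mathrm{col}(\mathbf{R})$ (or a subspace thereof), its error is bounded below by the distance from $\mathbf{X}_{(\alpha)}$ to that column space, which is $\ge$ the CTD-S error; hence CTD-S is at least as accurate. The main obstacle I anticipate is the second half: pinning down tensor-CUR's reconstruction precisely enough to guarantee it lies in (a subspace of) $\mathrm{col}(\mathbf{R})$ — the paper's own description of tensor-CUR is informal, so I would need to either cite the precise form from \cite{mahoney2008tensor} or phrase the comparison as: CTD-S attains the infimum over all decompositions using the retained fibers, so no fixed-fiber-set method — tensor-CUR included — can do strictly better. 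The incremental-inverse verification is routine given the Colibri lemma, so the real care goes into making the "tensor-CUR's output lives in the right subspace" step airtight.
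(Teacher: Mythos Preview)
Your proposal is correct and follows essentially the same route as the paper: matricize to reduce to a least-squares problem, identify CTD-S's $\mathbf{U}$ as $(\mathbf{R}^T\mathbf{R})^{-1}$ via the Colibri rank-one update (the paper cites Theorem~1 of \cite{tong2008colibri} for this), conclude that $\mathbf{R}\mathbf{U}\mathbf{R}^T\mathbf{X}_{(\alpha)}$ is the orthogonal projection onto $\mathrm{col}(\mathbf{R})=\mathrm{col}(\mathbf{R}_0)$, and observe that tensor-CUR uses a different, non-optimal $\mathbf{L}_{(\alpha)}$.

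One remark on the obstacle you flag: the paper's framing dissolves it. Rather than arguing that tensor-CUR's output happens to lie in $\mathrm{col}(\mathbf{R})$, the paper simply notes that \emph{both} methods are mode-$\alpha$ LR decompositions sharing the same $\mathbf{R}_0$, so by Definition~\ref{def:LR tensor decomposition} their matricized approximations are both of the form $\mathbf{R}_0\mathbf{L}_{(\alpha)}$ for some $\mathbf{L}_{(\alpha)}$; the error is then a function of $\mathbf{L}_{(\alpha)}$ alone, minimized at $\mathbf{R}_0^\dagger\mathbf{X}_{(\alpha)}$, which CTD-S attains. No inspection of tensor-CUR's internals is needed beyond ``it is an LR decomposition with the same sampled fibers and a different $\mathbf{L}_{(\alpha)}$.'' Your intermediate inequality over mixing matrices $\mathbf{U}'$ is thus more restrictive than necessary; the paper's version quantifies directly over all $\mathbf{L}_{(\alpha)}$.
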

\begin{proof}
	CTD-S and tensor-CUR are both mode-$\alpha$ LR tensor decomposition methods and have errors in the form of Equation \ref{LR decomposition error}.
	\begin{equation} \label{LR decomposition error}
	||\T{X} - \T{L}\times_\alpha \mathbf{R}||_F = ||\mathbf{X}_{(\alpha)} - \mathbf{R} \mathbf{L}_{(\alpha)}||_F
	\end{equation}
	where the equality 
comes from Equation \ref{eq:n_mode_product_property}. They both sample fibers from $\T{X}$ in the same way in their initial step. Let $\mathbf{R}_0$ be the matrix consisting of those initially sampled fibers. Assume the same $\mathbf{R}_0$ is given for both methods. Then, the error is now a function of ${\mathbf{L}_{(\alpha)}}$ as shown in Equation \ref{objective function of LR decomposition}. Equation \ref{minimizes LR decomposition error} shows the ${\mathbf{L}_{(\alpha)}}$ which minimizes the error.
	\begin{equation} \label{objective function of LR decomposition}
	f(\mathbf{L}_{(\alpha)}) = ||\mathbf{X}_{(\alpha)} - \mathbf{R}_0 \mathbf{L}_{(\alpha)}||_F
	\end{equation}
	\begin{equation} \label{minimizes LR decomposition error}
	\underset{\mathbf{L}_{(\alpha)}} {\arg \min} f(\mathbf{L}_{(\alpha)}) = \mathbf{R}_0^\dagger \mathbf{X}_{(\alpha)}
	\end{equation}
	We show CTD-S has the minimum error. Let $\mathbf{R}$ consists of the independent columns of $\mathbf{R}_0$.
	\begin{equation} \label{eq: CTD-S has minimum error}
	\begin{aligned}
		||\mathbf{X}_{(\alpha)} - \mathbf{R}_0 \mathbf{R}_0^\dagger \mathbf{X}_{(\alpha)}||_F & = ||\mathbf{X}_{(\alpha)} - \mathbf{R} \mathbf{R}^\dagger \mathbf{X}_{(\alpha)}||_F \\
		& = ||\mathbf{X}_{(\alpha)} - \mathbf{R} (\mathbf{R}^T \mathbf{R})^{-1} \mathbf{R}^T \mathbf{X}_{(\alpha)}||_F \\
		& = ||\mathbf{X}_{(\alpha)} - \mathbf{R} \mathbf{U} \mathbf{C}_{(\alpha)}||_F \\
		& = \text{ Error of CTD-S}
	\end{aligned}
	\end{equation}
	The first equality in Equation \ref{eq: CTD-S has minimum error} holds because $\mathbf{R}_0 \mathbf{R}_0^\dagger \mathbf{X}_{(\alpha)}$ means the projection of $\mathbf{X}_{(\alpha)}$ onto the column space of $\mathbf{R}_0$, and $\mathbf{R}$ and $\mathbf{R}_0$ have the same column space.
	The third equality holds because CTD-S uses $(\mathbf{R}^T \mathbf{R})^{-1}$ for its factor $\mathbf{U}$ (theorem 1 in \cite{tong2008colibri}), and $\mathbf{R}^T \mathbf{X}_{(\alpha)}$ for its factor $\T{C}$.
	In contrast, tensor-CUR does not have the minimum error because $f(\mathbf{L}_{(\alpha)})$ in the Equation \ref{objective function of LR decomposition} is convex and tensor-CUR has $\mathbf{L}_{(\alpha)}$ which is different from $\mathbf{R}_0^\dagger \mathbf{X}_{(\alpha)}$. Specifically, tensor-CUR further samples rows (called slabs) from $\mathbf{X}_{(\alpha)}$ to construct its $\mathbf{L}_{(\alpha)}$. \hfill
\end{proof}

\subsection{CTD-D for Dynamic Tensors}
\paragraph{Overview.}
How can we design an efficient sampling-based dynamic tensor decomposition method?
In a dynamic setting, a new tensor arrives at every time step and we want to keep track of sampling-based tensor decomposition.
The main challenge is to update factors quickly while preserving accuracy.
Note that there has been no sampling-based dynamic tensor decomposition method in the literature.
Applying CTD-S at every time step is not a feasible option since it starts from scratch to update its factors, and thus running time increases rapidly as tensor grows.
We propose CTD-D, the first sampling-based dynamic tensor decomposition method.
CTD-D samples mode-$\alpha$ fibers only from the newly arrived tensor,
and then updates the factors appropriately using those sampled ones.
The main idea of CTD-D is to update the factors of CTD-S incrementally by (1) exploiting factors at previous time step and (2) reordering operations.
\begin{figure} [t]
	\begin{center}
		\includegraphics[width=0.45 \textwidth]{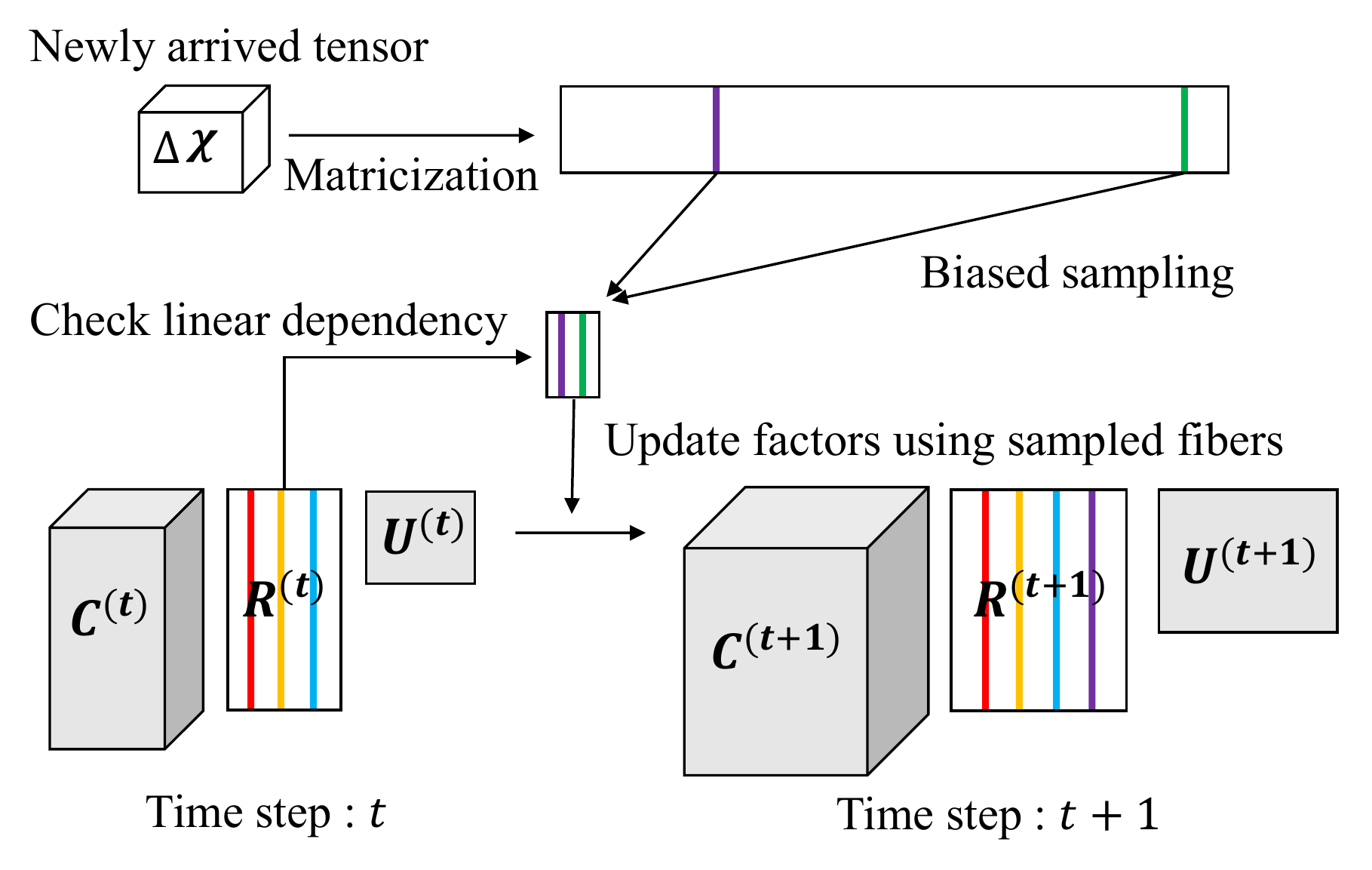}
	\end{center}
	\caption{The scheme for CTD-D.}
	\label{fig:CTD-D-overview}
\end{figure}
\begin{algorithm} [t]
	\small
	\caption{CTD-D for Dynamic Tensor} \label{alg:CTD-D}
	\begin{algorithmic}[1]
		\REQUIRE Tensor $\Delta \T{X} \in \mathbb{R}^{\mathit{I}_1 \times \cdots \times \mathit{I}_{N-1} \times 1}$, mode $\alpha \in \{1, \cdots ,N-1\}$, $\T{C}^{(t)}$, $\mathbf{U}^{(t)}$, $\mathbf{R}^{(t)}$, sample size $d \in \{1, \cdots ,\Delta N_\alpha\}$, and tolerance $\epsilon$
		
		\ENSURE $\T{C}^{(t+1)}$, $\mathbf{U}^{(t+1)}$, $\mathbf{R}^{(t+1)}$
		
		\STATE Let $\Delta \mathbf{X}_{(\alpha)}$ be the mode-$\alpha$ matricization of $\Delta \T{X}$
		
		\STATE Compute column distribution for $i = 1, \cdots, \Delta N_\alpha$: \\ $P(i) \leftarrow \frac{ \vert \Delta \mathbf{X}_{(\alpha)}(:,i) \vert^2 }{\Vert \Delta \mathbf{X}_{(\alpha)} \Vert_F^2}$
		
		\STATE Sample $d$ columns from $\Delta \mathbf{X}_{(\alpha)}$ based on $P(i)$. Let $I = \{i_1, \cdots , i_d\}$
		
		\STATE Let $I' = \{i'_1, \cdots , i'_{d'}\}$ be a set consisting of unique elements in $I$
		
		\STATE Initialize $\mathbf{R}^{(t+1)} \leftarrow \mathbf{R}^{(t)}$, $\mathbf{U}^{(t+1)} \leftarrow \mathbf{U}^{(t)}$, and $\Delta \mathbf{R} \leftarrow []$
		
		\FOR {$k = 1 : d'$}
		
		\STATE Let $\mathbf{x} \leftarrow \Delta \mathbf{X}_{(\alpha)}(:,i'_k)$
		
		\STATE Compute the residual: \\
		$\overrightarrow{res} \leftarrow (\mathbf{x} - \mathbf{R}^{(t+1)} \mathbf{U}^{(t+1)} (\mathbf{R}^{(t+1)})^T \mathbf{x})$
		
		\IF {$||\overrightarrow{res}|| \leq \epsilon ||\mathbf{x}||$}
		
		\STATE continue
		
		\ELSE
		
		\STATE Compute: $\delta \leftarrow ||\overrightarrow{res}||^2$ and $\overrightarrow{\mathbf{y}} \leftarrow \mathbf{U}^{(t+1)}(\mathbf{R}^{(t+1)})^T\mathbf{x}$
		
		\STATE Update $\mathbf{U}^{(t+1)}$: \\
		$\mathbf{U}^{(t+1)} \leftarrow $
		$\begin{pmatrix}
		\mathbf{U}^{(t+1)} + \overrightarrow{\mathbf{y}} \overrightarrow{\mathbf{y}}^T / \delta & - \overrightarrow{\mathbf{y}} / \delta \\
		- \overrightarrow{\mathbf{y}}^T / \delta & 1 / \delta \\
		\end{pmatrix}$
		
		\STATE Expand $\mathbf{R}^{(t+1)}$ and $\Delta \mathbf{R} : \mathbf{R}^{(t+1)} \leftarrow [\mathbf{R}^{(t+1)}, \mathbf{x}]$ and $\Delta \mathbf{R} \leftarrow [\Delta \mathbf{R}, \mathbf{x}]$
		
		\ENDIF
		
		\ENDFOR
		
		Update $\mathbf{C}_{(\alpha)}^{(t+1)}$ : \\
		
		\IF {$\Delta \mathbf{R}$ is not empty}
			
			\STATE $\mathbf{C}_{(\alpha)}^{(t+1)} \leftarrow$
			$\begin{pmatrix}
			\mathbf{C}_{(\alpha)}^{(t)} & (\mathbf{R}^{(t)})^T \Delta \mathbf{X}_{(\alpha)} \\
			(\Delta \mathbf{R})^T \mathbf{R}^{(t)} \mathbf{U}^{(t)} \mathbf{C}_{(\alpha)}^{(t)} & (\Delta \mathbf{R})^T \Delta \mathbf{X}_{(\alpha)} \\
			\end{pmatrix}$
			
			\ELSE
			
			\STATE $\mathbf{C}_{(\alpha)}^{(t+1)} \leftarrow$
			$\begin{pmatrix}
			\mathbf{C}_{(\alpha)}^{(t)} & (\mathbf{R}^{(t)})^T \Delta \mathbf{X}_{(\alpha)} \\
			\end{pmatrix}$
			
			\ENDIF
		
		\STATE Fold $\mathbf{C}_{(\alpha)}^{(t+1)}$ into $\T{C}^{(t+1)}$
		
		\RETURN $\T{C}^{(t+1)}$, $\mathbf{U}^{(t+1)}$, $\mathbf{R}^{(t+1)}$
	\end{algorithmic}
\end{algorithm}

\paragraph{Algorithm.}
Figure \ref{fig:CTD-D-overview} shows the scheme for CTD-D.
At each time step, CTD-D samples fibers from newly arrived tensor and updates factors by checking linear dependency of sampled fibers with the factor at previous time step.
Purple and green fiber are sampled from newly arrived tensor in Figure \ref{fig:CTD-D-overview}. Note that the purple fiber is added to the factor $\mathbf{R}$ since it is linearly independent of the fibers in the factor at the previous time step, while the linearly dependent green fiber is ignored.

For any time step $t$, CTD-D maintains its factors \\
$\T{C}^{(t)} \in \mathbb{R}^{\mathit{I}_1 \times \cdots \times \mathit{I}_{\alpha - 1} \times \tilde{d}_t \times \mathit{I}_{\alpha + 1} \times \cdots \times \mathit{I}_{N-1} \times t}$,
$\mathbf{U}^{(t)} \in \mathbb{R}^{\tilde{d}_t \times \tilde{d}_t}$, and $\mathbf{R}^{(t)} \in \mathbb{R}^{I_\alpha \times \tilde{d}_t}$ such that $\T{X}^{(t)} \approx \T{C}^{(t)}\times_\alpha \mathbf{R}^{(t)}\mathbf{U}^{(t)}$, where the upper subscript $(t)$ indicates that the factor is at time step $t$.
$\T{X}^{(t)}$ grows along the time mode and we assume that $N$-th mode is the time mode in a dynamic setting, where $N$ denotes the order of $\T{X}^{(t)}$.
At the next time step $t + 1$, CTD-D receives newly arrived tensor $\Delta\T{X} \in \mathbb{R}^{\mathit{I}_1 \times \mathit{I}_2 \times \cdots \times \cdots \times \mathit{I}_{N-1} \times 1}$ and updates $\T{C}^{(t)}$, $\mathbf{U}^{(t)}$, and $\mathbf{R}^{(t)}$ into $\T{C}^{(t+1)} \in \mathbb{R}^{\mathit{I}_1 \times \cdots \times \mathit{I}_{\alpha - 1} \times \tilde{d}_{t+1} \times \mathit{I}_{\alpha + 1} \times \cdots \times \mathit{I}_{N-1} \times (t + 1) }$, $\mathbf{U}^{(t+1)} \in \mathbb{R}^{\tilde{d}_{t+1} \times \tilde{d}_{t+1}}$, and $\mathbf{R}^{(t+1)} \in \mathbb{R}^{I_\alpha \times \tilde{d}_{t+1}}$, respectively such that $\T{X}^{(t+1)} \approx \T{C}^{(t+1)}\times_\alpha \mathbf{R}^{(t+1)}\mathbf{U}^{(t+1)}$.

Algorithm \ref{alg:CTD-D} shows the procedure of CTD-D.
First, CTD-D computes the probabilities of mode-$\alpha$ fibers of $\Delta \T{X}$, which are proportional to the norm of each fiber, and then samples $d$ fibers according to the probabilities with replacement in lines 1-3.
CTD-D selects unique $d'$ fibers in line 4 and initializes $\mathbf{R}^{(t+1)}$, $\mathbf{U}^{(t+1)}$, and $\Delta \mathbf{R}$ with $\mathbf{R}^{(t)}$, $\mathbf{U}^{(t)}$, and an empty matrix respectively in line 5, where $\Delta \mathbf{R}$ consists of differences between $\mathbf{R}^{(t)}$ and $\mathbf{R}^{(t+1)}$.
In lines 6-16, CTD-D expands $\mathbf{R}^{(t+1)}$ with those sampled fibers by sequentially evaluating linear dependency of each fiber with the column space of $\mathbf{R}^{(t+1)}$. $\mathbf{R}^{(t+1)}$ and $\mathbf{U}^{(t+1)}$ are updated in this step. Finally, $\mathbf{C}_{(\alpha)}^{(t+1)}$ is updated in lines 17-21.

In the following, we describe two main ideas of CTD-D to update $\mathbf{C}_{(\alpha)}^{(t+1)}$, $\mathbf{R}^{(t+1)}$, and $\mathbf{U}^{(t+1)}$ efficiently while preserving accuracy: exploiting factors at previous time step, and reordering operations.

\noindent \textbf{(1) Exploiting factors at previous time step:}
First, we explain how we update $\mathbf{R}^{(t+1)}$ and $\mathbf{U}^{(t+1)}$ using the idea.
In line 5 of Algorithm \ref{alg:CTD-S}, CTD-S initializes $\mathbf{R}$ and $\mathbf{U}$ using one of the sampled fibers.
This is because CTD-S requires $\mathbf{R}$ to consist of linearly independent columns and it is satisfied when $\mathbf{R}$ has only one fiber.
Since $\mathbf{R}^{(t)}$ already consists of linearly independent columns, we initialize $\mathbf{R}^{(t+1)}$ and $\mathbf{U}^{(t+1)}$ with $\mathbf{R}^{(t)}$ and $\mathbf{U}^{(t)}$ respectively in line 5 of Algorithm \ref{alg:CTD-D}.
In lines 6-16, we check linear independence of each sampled fiber from $\Delta \mathbf{X}_{(\alpha)}$ with $\mathbf{R}^{(t+1)}$ .
If the fiber is linearly independent, we expand $\mathbf{R}^{(t+1)}$ and update $\mathbf{U}^{(t+1)}$ as in the lines 11-13 of Algorithm \ref{alg:CTD-S}.

Second, we describe how we update $\T{C}^{(t+1)}$ using the idea. We assume that $\Delta \mathbf{R}$ is not empty after line 16 of Algorithm \ref{alg:CTD-D}.
At time step $t$ and its successor step $t + 1$, CTD-S satisfies Equations \ref{eq:C_t} and \ref{eq:C_t+1}, where $\mathbf{C}_{(\alpha)}^{(t)}$ has the size $\tilde{d}_t \times N_\alpha^{(t)}$ and $\mathbf{C}_{(\alpha)}^{(t+1)}$ has the size $\tilde{d}_{t+1} \times N_\alpha^{(t+1)}$.
\begin{equation} \label{eq:C_t}
\mathbf{C}_{(\alpha)}^{(t)} \leftarrow (\mathbf{R}^{(t)})^T \mathbf{X}_{(\alpha)}^{(t)}
\end{equation}
\begin{equation} \label{eq:C_t+1}
\mathbf{C}_{(\alpha)}^{(t+1)} \leftarrow (\mathbf{R}^{(t+1)})^T \mathbf{X}_{(\alpha)}^{(t+1)}
\end{equation}

We can rewrite $\mathbf{R}^{(t+1)}$ and $\mathbf{X}_{(\alpha)}^{(t+1)}$ as Equations \ref{eq:R_t+1} and \ref{eq:X_t+1} respectively, where $\Delta \mathbf{R}$ has the size $I_\alpha \times \Delta \tilde{d}$ and $\Delta\mathbf{X}_{(\alpha)}$ has the size $I_\alpha \times \Delta N_\alpha$ such that $N_\alpha^{(t+1)} = N_\alpha^{(t)} + \Delta N_\alpha$ and $\tilde{d}_{t+1} = \tilde{d}_{t} + \Delta \tilde{d}$.
\begin{equation} \label{eq:R_t+1}
\mathbf{R}^{(t+1)} = \begin{bmatrix}
\mathbf{R}^{(t)} & \Delta \mathbf{R}
\end{bmatrix}
\end{equation}
\begin{equation} \label{eq:X_t+1}
\mathbf{X}_{(\alpha)}^{(t+1)} = \begin{bmatrix}
\mathbf{X}_{(\alpha)}^{(t)} & \Delta\mathbf{X}_{(\alpha)}
\end{bmatrix}
\end{equation}

We replace $\mathbf{R}^{(t+1)}$ and $\mathbf{X}_{(\alpha)}^{(t+1)}$ in Equation \ref{eq:C_t+1} with those in Equations \ref{eq:R_t+1} and \ref{eq:X_t+1}, respectively, to obtain the Equation \ref{eq:C_t+1_rewrite}.
\begin{equation} \label{eq:C_t+1_rewrite}
\begin{aligned}
\mathbf{C}_{(\alpha)}^{(t+1)}
& \leftarrow \begin{bmatrix}
(\mathbf{R}^{(t)})^T \\
(\Delta \mathbf{R})^T
\end{bmatrix}
\begin{bmatrix}
\mathbf{X}_{(\alpha)}^{(t)} & \Delta \mathbf{X}_{(\alpha)}
\end{bmatrix} \\
& = \begin{bmatrix}
(\mathbf{R}^{(t)})^T \mathbf{X}_{(\alpha)}^{(t)} & (\mathbf{R}^{(t)})^T \Delta \mathbf{X}_{(\alpha)} \\
(\Delta \mathbf{R})^T \mathbf{X}_{(\alpha)}^{(t)} & (\Delta \mathbf{R})^T \Delta \mathbf{X}_{(\alpha)}
\end{bmatrix}
\end{aligned}
\end{equation}

CTD-S computes all the 4 elements ($(\mathbf{R}^{(t)})^T \mathbf{X}_{(\alpha)}^{(t)}$, $(\mathbf{R}^{(t)})^T \Delta \mathbf{X}_{(\alpha)}$, $(\Delta \mathbf{R})^T \mathbf{X}_{(\alpha)}^{(t)}$, and $(\Delta \mathbf{R})^T \Delta \mathbf{X}_{(\alpha)}$) in Equation \ref{eq:C_t+1_rewrite} from scratch, hence requires a lot of computations. To make computation of $\mathbf{C}_{(\alpha)}^{(t+1)}$ incremental, we exploit existing factors at time step $t$ : $\mathbf{C}_{(\alpha)}^{(t)}$, $\mathbf{R}^{(t)}$, and $\mathbf{U}^{(t)}$. First, we use $\mathbf{C}_{(\alpha)}^{(t)}$ instead of $(\mathbf{R}^{(t)})^T \mathbf{X}_{(\alpha)}^{(t)}$ as in the Equation \ref{eq:C_t}. Second, we should replace $\mathbf{X}_{(\alpha)}^{(t)}$ in $(\Delta \mathbf{R})^T \mathbf{X}_{(\alpha)}^{(t)}$ with the factors at time step $t$, since CTD-D does not have $\mathbf{X}_{(\alpha)}^{(t)}$ as its input unlike CTD-S. We substitute $\mathbf{R}^{(t)} \mathbf{U}^{(t)} \mathbf{C}_{(\alpha)}^{(t)}$ for $\mathbf{X}_{(\alpha)}^{(t)}$. This is because CTD-S ensures $\T{X}^{(t)} \approx \T{C}^{(t)}\times_\alpha \mathbf{R}^{(t)}\mathbf{U}^{(t)}$ which can be rewritten as $\mathbf{X}_{(\alpha)}^{(t)} \approx \mathbf{R}^{(t)} \mathbf{U}^{(t)} \mathbf{C}_{(\alpha)}^{(t)}$ by Equation \ref{eq:n_mode_product_property}. Equation \ref{eq:C_t+1_final} shows the final form of $\mathbf{C}_{(\alpha)}^{(t+1)}$ which is the same as line 18 in Algorithm \ref{alg:CTD-D}.
\begin{equation} \label{eq:C_t+1_final}
\mathbf{C}_{(\alpha)}^{(t+1)} \leftarrow \begin{bmatrix}
\mathbf{C}_{(\alpha)}^{(t)} & (\mathbf{R}^{(t)})^T \Delta \mathbf{X}_{(\alpha)} \\
(\Delta \mathbf{R})^T \mathbf{R}^{(t)} \mathbf{U}^{(t)} \mathbf{C}_{(\alpha)}^{(t)} & (\Delta \mathbf{R})^T \Delta \mathbf{X}_{(\alpha)}
\end{bmatrix}
\end{equation}
$(\Delta \mathbf{R})^T \mathbf{R}^{(t)} \mathbf{U}^{(t)} \mathbf{C}_{(\alpha)}^{(t)}$ and $(\Delta \mathbf{R})^T \Delta \mathbf{X}_{(\alpha)}$ are ignored when $\Delta \mathbf{R}$ is empty as expressed in line 20 of Algorithm \ref{alg:CTD-D}.

\noindent \textbf{(2) Reordering computations:}
The computation order for the element $(\Delta \mathbf{R})^T \mathbf{R}^{(t)} \mathbf{U}^{(t)} \mathbf{C}_{(\alpha)}^{(t)}$ is important since each order has a different computation cost. We want to determine the optimal parenthesization among possible parenthesizations.
It can be shown that
$(((\Delta \mathbf{R})^T \mathbf{R}^{(t)}) \mathbf{U}^{(t)}) \mathbf{C}_{(\alpha)}^{(t)}$ is the optimal one with $\mathcal{O}((\Delta \tilde{d}) \tilde{d}_t (I_\alpha + \tilde{d}_t + N_\alpha^{(t)}))$ operations and can be done by parenthesizing from the left.

We prove that CTD-D is faster than CTD-S in Lemma \ref{lemma: CTD-D complexity}.
\begin{lemma} \label{lemma: CTD-D complexity}
	CTD-D is faster than CTD-S. The computational complexity of CTD-D is $\mathcal{O}((\Delta \tilde{d}) \tilde{d}_t (N_\alpha^{(t)} + I_\alpha) + (\tilde{d}_{t+1} I_\alpha + d) \Delta N_\alpha + d' (\tilde{d}_{t+1}^2 + nnz(\mathbf{R}^{(t+1)})) + d \log d + nnz(\Delta \T{X}))$.
\end{lemma}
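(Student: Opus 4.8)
The plan is to reuse the line-by-line accounting of the proof of Lemma~\ref{lemma: CTD-S complexity}, now applied to Algorithm~\ref{alg:CTD-D}, and then to compare the resulting bound term-by-term against the cost of rerunning CTD-S on the grown tensor $\T{X}^{(t+1)}$. First I would bound the sampling block (lines~1--4): matricizing $\Delta\T{X}$ and forming the column distribution cost $\mathcal{O}(nnz(\Delta\T{X}) + \Delta N_\alpha)$, drawing $d$ samples costs $\mathcal{O}(d\,\Delta N_\alpha)$, and extracting the $d'$ unique indices costs $\mathcal{O}(d \log d)$; this is identical in spirit to lines~1--4 of Algorithm~\ref{alg:CTD-S} with $(s, N_\alpha, \T{X})$ replaced by $(d, \Delta N_\alpha, \Delta\T{X})$. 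Next, for the rank-update block (lines~6--16) I would invoke the same argument as Lemma~1 of \cite{tong2008colibri} that already underlies Lemma~\ref{lemma: CTD-S complexity}: each of the $d'$ iterations computes a residual against $\mathbf{R}^{(t+1)}$ and a rank-one update of $\mathbf{U}^{(t+1)}$, so with $\mathbf{R}^{(t+1)}$ reaching $\tilde d_{t+1}$ columns the total is $\mathcal{O}(d'(\tilde d_{t+1}^{\,2} + nnz(\mathbf{R}^{(t+1)})))$. The only structural difference from CTD-S is that we initialize with $\mathbf{R}^{(t)}$ rather than a single fiber, which does not change the per-iteration asymptotics.

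The step that produces the genuinely new terms is the $\mathbf{C}_{(\alpha)}^{(t+1)}$ update of lines~17--21, which I would bound using the block form in Equation~\ref{eq:C_t+1_final}: the $\mathbf{C}_{(\alpha)}^{(t)}$ block is a free copy; the two blocks $(\mathbf{R}^{(t)})^T \Delta\mathbf{X}_{(\alpha)}$ and $(\Delta\mathbf{R})^T \Delta\mathbf{X}_{(\alpha)}$ together cost $\mathcal{O}(\tilde d_{t+1} I_\alpha \Delta N_\alpha)$ since $\tilde d_t + \Delta\tilde d = \tilde d_{t+1}$; and the cross block $(\Delta\mathbf{R})^T \mathbf{R}^{(t)} \mathbf{U}^{(t)} \mathbf{C}_{(\alpha)}^{(t)}$, evaluated by the left-to-right parenthesization, costs $\mathcal{O}((\Delta\tilde d)\,\tilde d_t (I_\alpha + \tilde d_t + N_\alpha^{(t)}))$, while folding $\mathbf{C}_{(\alpha)}^{(t+1)}$ back into $\T{C}^{(t+1)}$ is absorbed by these terms. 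Summing the three blocks and collapsing lower-order terms --- $\tilde d_t I_\alpha \Delta N_\alpha + (\Delta\tilde d) I_\alpha \Delta N_\alpha \le \tilde d_{t+1} I_\alpha \Delta N_\alpha$, and merging $d\,\Delta N_\alpha$ into $(\tilde d_{t+1} I_\alpha + d)\Delta N_\alpha$ --- yields exactly the claimed bound.

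For the ``faster than CTD-S'' half, rerunning CTD-S on $\T{X}^{(t+1)}$ costs $\mathcal{O}((\tilde s I_\alpha + s) N_\alpha^{(t+1)} + s'(\tilde s^2 + nnz(\mathbf{R})) + s \log s + nnz(\T{X}^{(t+1)}))$ by Lemma~\ref{lemma: CTD-S complexity}, with $N_\alpha^{(t+1)} = N_\alpha^{(t)} + \Delta N_\alpha$ and $nnz(\T{X}^{(t+1)}) = nnz(\T{X}^{(t)}) + nnz(\Delta\T{X})$. Under the dynamic-setting assumptions that a single incoming slab is small ($\Delta N_\alpha \ll N_\alpha^{(t)}$, $nnz(\Delta\T{X}) \ll nnz(\T{X}^{(t)})$, $\Delta\tilde d \ll \tilde d_{t+1}$) and that the rank stays bounded ($\tilde d_{t+1} = \mathcal{O}(\tilde s)$, $d \le s$), I would match each CTD-D term to a no-smaller CTD-S term: the dominant gain is that every occurrence of the history size $N_\alpha^{(t+1)}$ and $nnz(\T{X}^{(t+1)})$ in CTD-S is replaced by the update size $\Delta N_\alpha$ and $nnz(\Delta\T{X})$ in CTD-D, the rank-update block $d'(\tilde d_{t+1}^2 + nnz(\mathbf{R}^{(t+1)}))$ is of the same order as $s'(\tilde s^2 + nnz(\mathbf{R}))$, and the only genuinely new term $(\Delta\tilde d)\,\tilde d_t (N_\alpha^{(t)} + I_\alpha)$, being proportional to the small growth $\Delta\tilde d$, is dominated by the $\tilde s I_\alpha N_\alpha^{(t+1)}$ cost of forming $\T{C}$ inside CTD-S. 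Hence CTD-D's per-step cost is asymptotically no larger than that of a from-scratch CTD-S run, and strictly smaller whenever the accumulated history dominates the update.

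I expect the main obstacle to be twofold. First, the optimality of the left-to-right parenthesization of $(\Delta\mathbf{R})^T \mathbf{R}^{(t)} \mathbf{U}^{(t)} \mathbf{C}_{(\alpha)}^{(t)}$ requires a short matrix-chain comparison over all parenthesizations, using that the four dimensions are $\Delta\tilde d,\ \tilde d_t,\ \tilde d_t,\ N_\alpha^{(t)}$ with $N_\alpha^{(t)}$ the largest, so that deferring the multiplication by $\mathbf{C}_{(\alpha)}^{(t)}$ is what avoids an intermediate matrix of width $N_\alpha^{(t)}$. Second, I must be explicit about where sparsity is and is not exploited --- the $(\mathbf{R}^{(t)})^T \Delta\mathbf{X}_{(\alpha)}$-type blocks are charged as dense $I_\alpha \times \Delta N_\alpha$ products while the rank-update block is charged in terms of $nnz(\mathbf{R}^{(t+1)})$, exactly as in Lemma~\ref{lemma: CTD-S complexity} --- and I must state the ``small slab'' and bounded-rank hypotheses up front so that the term-by-term comparison with a rerun of CTD-S is rigorous rather than merely heuristic.
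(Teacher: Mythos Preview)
Your proposal is correct and follows essentially the same line-by-line accounting as the paper's proof: same cost for the sampling block, same invocation of Lemma~1 of \cite{tong2008colibri} for the rank-update loop, and the same three-block breakdown of the $\mathbf{C}_{(\alpha)}^{(t+1)}$ update with the left-to-right parenthesization for the cross block. Your comparison with CTD-S is actually more careful than the paper's --- the paper simply observes that the single term $\tilde{s} I_\alpha N_\alpha$ in CTD-S already dominates every term in the CTD-D bound, without spelling out the ``small slab'' and bounded-rank hypotheses you make explicit.
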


\begin{proof}	
	The lines 1-4 of Algorithm \ref{alg:CTD-D} for CTD-D are similar to those of Algorithm \ref{alg:CTD-S} for CTD-S. The only difference is that CTD-D samples $d$ columns from $\Delta \mathbf{X}_{(\alpha)}$ while CTD-S samples $s$ columns from $\mathbf{X}_{(\alpha)}$. Thus, lines 1-4 takes $\mathcal{O}(nnz(\Delta \T{X}) + d \Delta N_\alpha + d \log d)$. Updating $\mathbf{R}^{(t+1)}$ and $\mathbf{U}^{(t+1)}$ in lines 5-16 needs $\mathcal{O}(d' (\tilde{d}_{t+1}^2 + nnz(\mathbf{R}^{(t+1)})))$ operations as proved in Lemma 1 in \cite{tong2008colibri}. In updating $\T{C}^{(t+1)}$ in lines 17-18, $(\mathbf{R}^{(t)})^T \Delta \mathbf{X}_{(\alpha)}$ takes computational cost of $\mathcal{O}(\tilde{d}_t I_\alpha \Delta N_\alpha)$. $(\Delta \mathbf{R})^T \Delta \mathbf{X}_{(\alpha)}$ takes $\mathcal{O}(\Delta \tilde{d} I_\alpha \Delta N_\alpha)$ and $(\Delta \mathbf{R})^T \mathbf{R}^{(t)} \mathbf{U}^{(t)} \mathbf{C}_{(\alpha)}^{(t)}$ takes $\mathcal{O}((\Delta \tilde{d}) \tilde{d}_t (I_\alpha + \tilde{d}_t + N_\alpha^{(t)}))$.
Overall, CTD-D takes $\mathcal{O}((\Delta \tilde{d}) \tilde{d}_t (N_\alpha^{(t)} + I_\alpha) + (\tilde{d}_{t+1} I_\alpha + d) \Delta N_\alpha + d' (\tilde{d}_{t+1}^2 + nnz(\mathbf{R}^{(t+1)})) + d \log d + nnz(\Delta \T{X}))$.
	
	CTD-D is faster than CTD-S because CTD-S has $\tilde{s} I_\alpha N_\alpha$ in its complexity, which is much larger than all the terms in the complexity of CTD-D.\hfill
\end{proof}

\section{Experiments}
\label{sec:experiments}
In this and the next sections, we perform experiments to answer the following questions. \\

\begin{table}[t]
	\small
	\setlength{\tabcolsep}{3pt}
	\caption{Summary of the tensor data used.}
	\begin{center}
		{	
			\begin{tabular}{C{3cm} R{0.7cm} R{0.7cm} R{0.7cm} R{2cm}}
				\toprule
				
				\textbf{Name} & $I_1$ & $I_2$ & $I_3$ & \textbf{Nonzeros} \\
				
				\midrule
				
				Hypertext 2009\tablefootnote{http://konect.uni-koblenz.de/networks/sociopatterns-hypertext} & 112 & 113 & 5,246 & 20,818 \\
				
				Haggle\tablefootnote{http://konect.uni-koblenz.de/networks/contact}& 77 & 274 & 1,567 & 27,972 \\
				
				Manufacturing emails\tablefootnote{http://konect.uni-koblenz.de/networks/radoslaw\_email} & 167 & 166 & 2,615 & 70,523 \\
				
				 Infectious\tablefootnote{http://konect.uni-koblenz.de/networks/sociopatterns-infectious} & 407 & 410 & 1,392 & 17,298 \\
				
				
				CAIDA (synthetic) & 189 & 189 & 1,000 & 20,511 \\
				
				
				\bottomrule
				
			\end{tabular}
		}
	\end{center}
	\label{tab:dataset}
\end{table}

\noindent \textbf{Q1} : What is the performance of our static method CTD-S compared to the competing method tensor-CUR? (Section \ref{subsec:Performance of CTD-S})\\
\noindent \textbf{Q2} : How do the performance of CTD-S and tensor-CUR change with regard to the sample size parameter? (Section \ref{subsec:Performance of CTD-S})\\
\noindent \textbf{Q3} : What is the performance of our dynamic method CTD-D compared to the static method CTD-S? (Section \ref{subsec:Performance of CTD-D})\\
\noindent \textbf{Q4} : What is the result of applying CTD-D for online DDoS attack detection? (Section \ref{sec:ctd at works})\\
\vspace{-0.3cm}
\subsection{Experimental Settings}
\paragraph{Measure.} We define three metrics (1. relative error, 2. memory usage, and 3. running time) as follows.
First, a relative error is defined as Equation \ref{eq:error}. $\T{X}$ denotes the original tensor and $\tilde{\T{X}}$ is the tensor reconstructed from the factors of $\T{X}$. For example, $\tilde{\T{X}} = \T{C}\times_\alpha \mathbf{R}\mathbf{U}$ in CTD-S.
\begin{equation}\label{eq:error}
Relative Error = \frac{||\tilde{\T{X}} - \T{X}||_F^2}{||\T{X}||_F^2}
\end{equation}
Second, memory usage is defined as Equation \ref{eq:memoryEff}.
Memory usage measures the relative amount of memory for storing the resulting factors. The denominator and numerator indicate the amount of memory for storing the original tensor and the resulting factors, respectively.

\begin{equation}\label{eq:memoryEff}
Memory Usage = \frac{nnz(\T{C}) + nnz(\mathbf{U}) + nnz(\mathbf{R})}{nnz(\T{X})}
\end{equation}
Finally, running time is measured in seconds.

\paragraph{Data.} Table \ref{tab:dataset} shows the data we used in our experiments. 

\paragraph{Machine.} All the experiments are performed on a machine with a 10-core Intel 2.20 GHz CPU and 256 GB RAM.

\paragraph{Competing method.} We compare our proposed method CTD with tensor-CUR \cite{mahoney2008tensor}, the state-of-the-art sampling-based tensor decomposition method. Both methods are implemented in MATLAB.

\begin{figure*} [t]
	\subfloat[\textbf{Relative error vs. sample size}]
	{	\includegraphics[width=0.30 \textwidth]{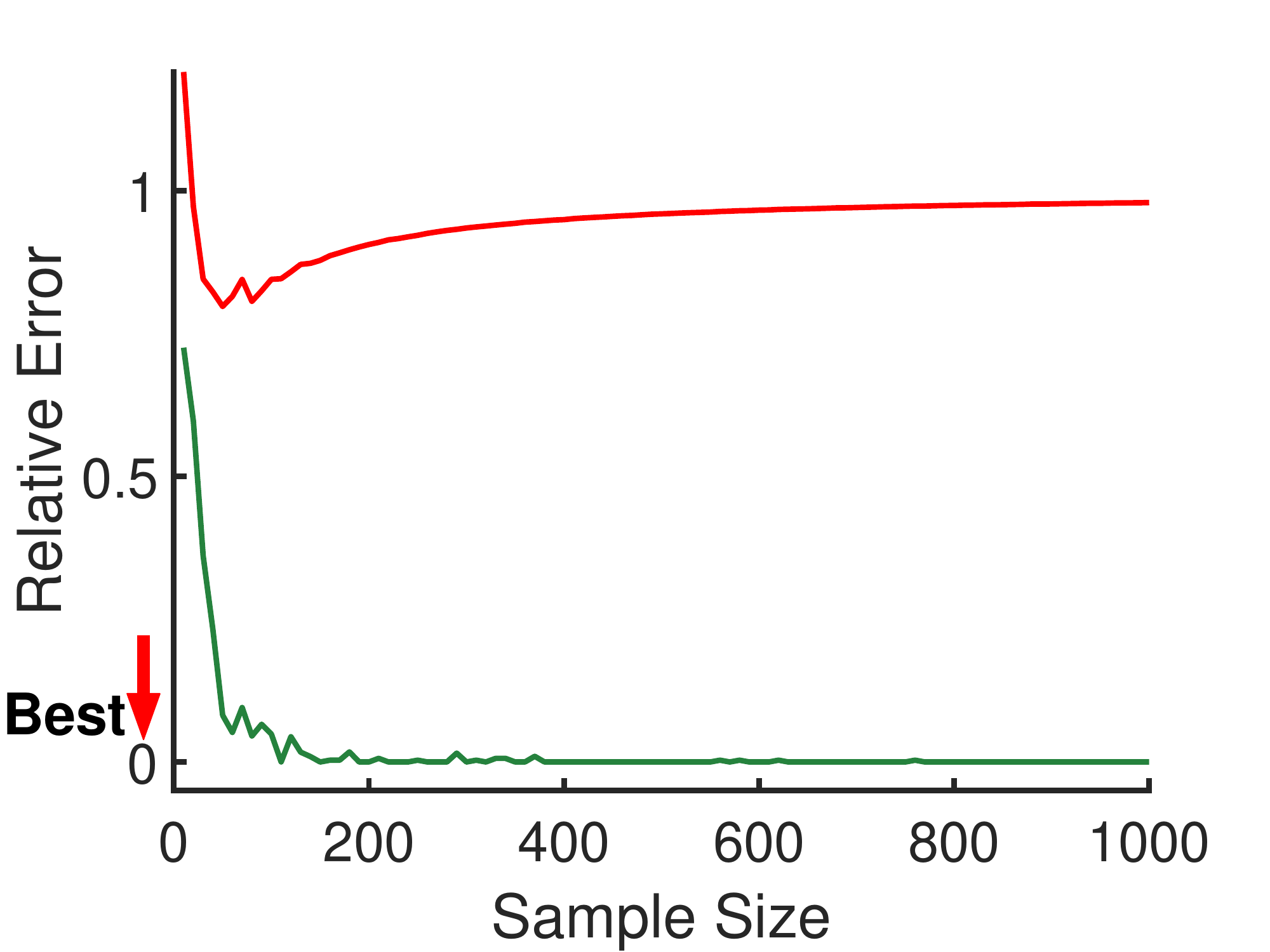}
	}
	\subfloat[\textbf{Running time vs. sample size}]
	{	\includegraphics[width=0.30 \textwidth]{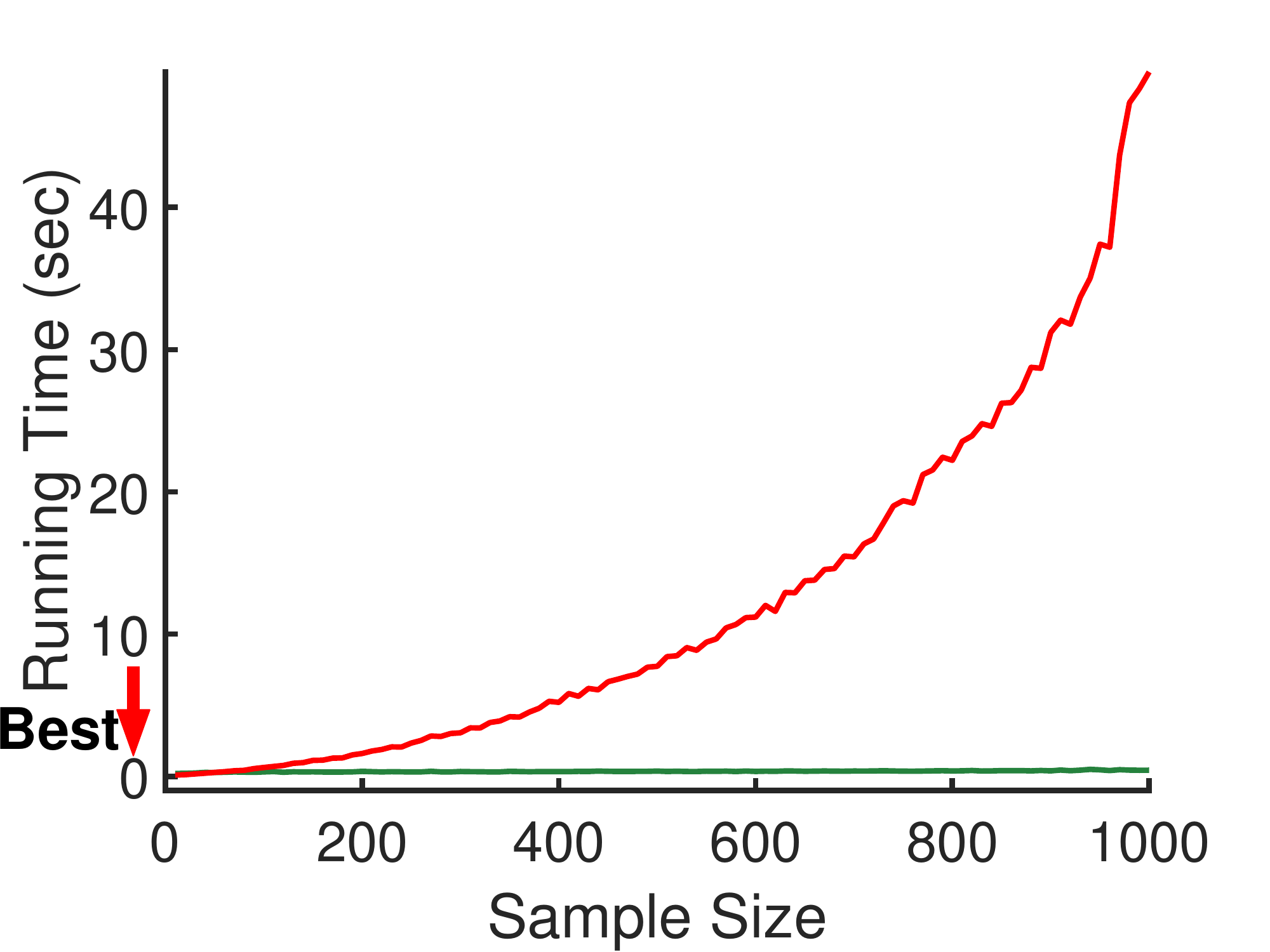}
	}
	\subfloat[\textbf{Memory usage vs. sample size}]
	{	\includegraphics[width=0.30 \textwidth]{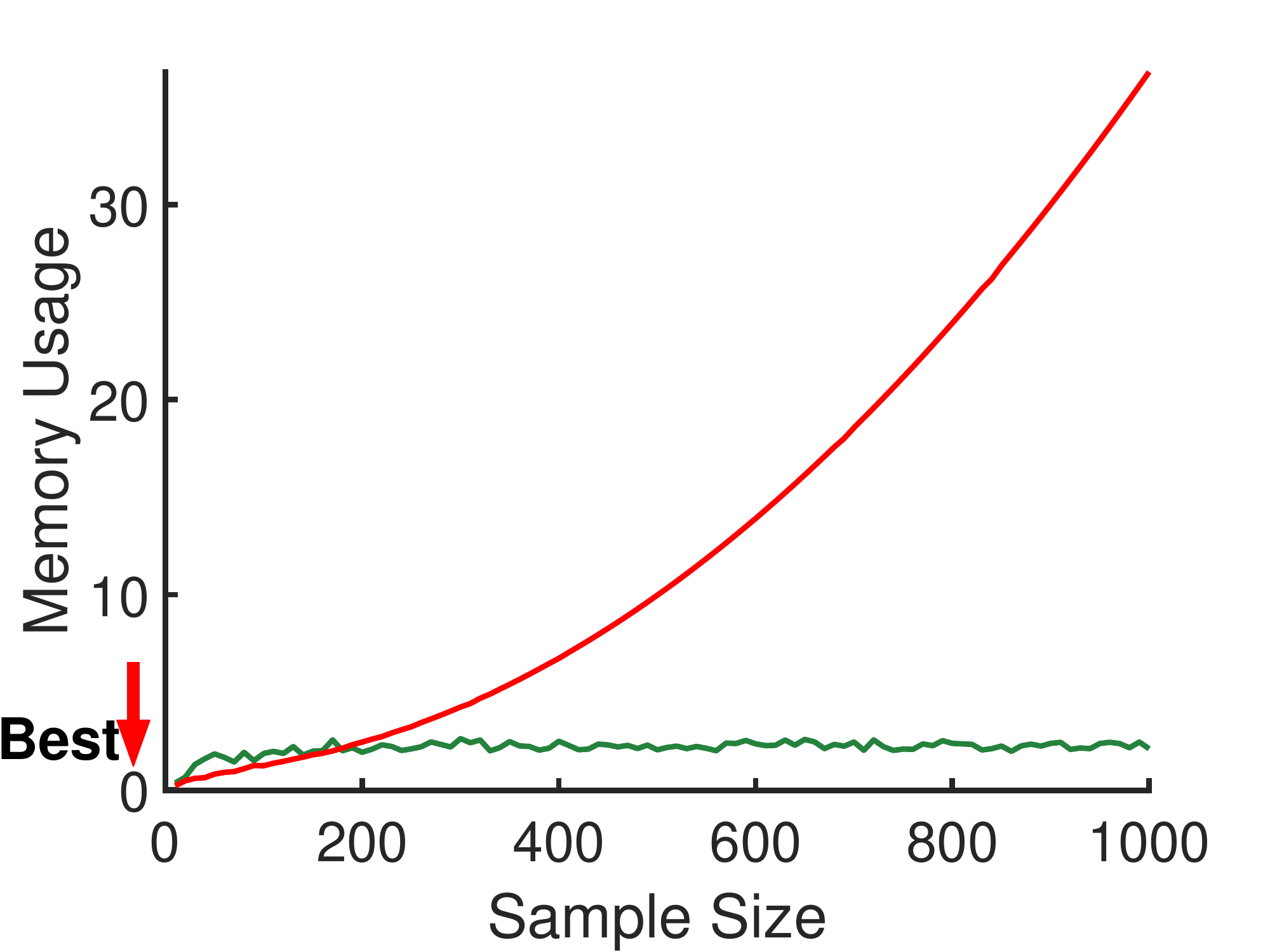}
	}
	
	\includegraphics[width=0.35 \textwidth]{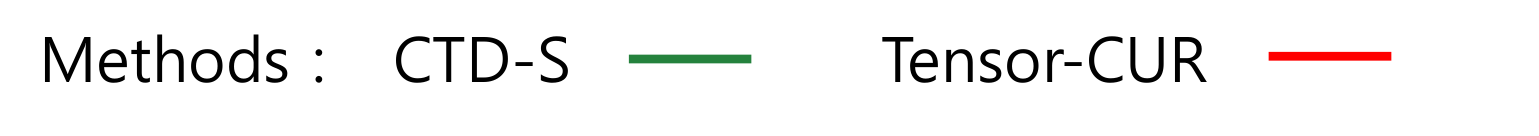}
	
	\caption{Error, running time, and memory usage of CTD-S compared to those of tensor-CUR over sample size $s$ for Haggle dataset. CTD-S is more accurate over various sample sizes, and its running time and memory usage are relatively constant compared to the tensor-CUR.}
	\label{fig:CTD-S by c}
\end{figure*}

\subsection{Performance of CTD-S} \label{subsec:Performance of CTD-S}
We measure the performance of CTD-S to answer Q1 and Q2.
As a result, CTD-S is 17$\sim$83$\times$ more accurate for the same level of running time,
5$\sim$86$\times$ faster, and 7$\sim$12$\times$ more memory-efficient for the same level of error compared to tensor-CUR.
CTD-S is more accurate over various sample sizes and its running time and memory usage are relatively constant compared to the tensor-CUR.
The detail of the experiment is as follows.

Both CTD-S and tensor-CUR takes a given tensor $\T{X}$, mode $\alpha$, and a sample size $s$ as input because they are LR tensor decomposition methods. In each experiment, we give the same input and compare the performance. We set $\alpha = 1$ and perform experiments under various sample sizes $s$. We set the number of slabs to sample $r=s$ and the rank $k = 10$ in tensor-CUR, and set $\epsilon = 10^{-6}$ in CTD-S.

Figure \ref{fig:CTD-S all} shows the running time vs. error and memory usage vs. error of CTD-S compared to tensor-CUR, which are the answers for Q1.
We use sample sizes from 1 to 1000. The error of tensor-CUR is much larger than that of CTD-S. This phenomenon coincides with the Lemma \ref{lemma: CTD-S accuracy}, which guarantees that CTD-S is more accurate than tensor-CUR theoretically.
We compare running time and memory usage under the same level of error, not under the same sample size, because there is a huge gap between the error of CTD-S and that of tensor-CUR under the same sample size.
%

Figure \ref{fig:CTD-S by c} shows the error, running time, and memory usage of CTD-S compared to those of tensor-CUR over increasing sample sizes $s$ for the Haggle dataset, which are the answers for Q2. The error of CTD-S decreases as $s$ increases because it gets more chance to sample important fibers which describe the original tensor well. The running time and memory usage of CTD-S are relatively constant compared to those of tensor-CUR. This is because CTD-S keeps only the linearly independent fibers, the number of which is bound to the rank of $\mathbf{X}_{(\alpha)}$. There are small fluctuations in the graphs since the sampling process of both CTD-S and tensor-CUR are based on randomness.

\subsection{Performance of CTD-D} \label{subsec:Performance of CTD-D}
We compare the performance of CTD-D with those of CTD-S to answer Q3. As a result, CTD-D is 2$\sim$3$\times$ faster for the same level of error compared to CTD-S.

To simulate a dynamic environment, we divide a given dataset into two parts along the time mode. We use the first 80\% of the dataset as historical data and the later 20\% as incoming data. We assume that historical data is already given and incoming data arrives sequentially at every time step, such that the whole data grows along the time mode. We measure the performance of CTD-D and CTD-S at each time step and calculate the average.
We set the sample size $d$ of CTD-D to be much smaller than that of CTD-S because CTD-D samples fibers only from the increment $\Delta \T{X}$ while CTD-S samples from the whole data $\T{X}$.
We set $d$ of CTD-D to be 0.01 times of $s$ of CTD-S, $\alpha = 1$, and $\epsilon = 10^{-6}$.

\begin{figure*} [t]
	\subfloat[\textbf{Hypertext 2009}]
	{	\includegraphics[width=0.23 \textwidth]{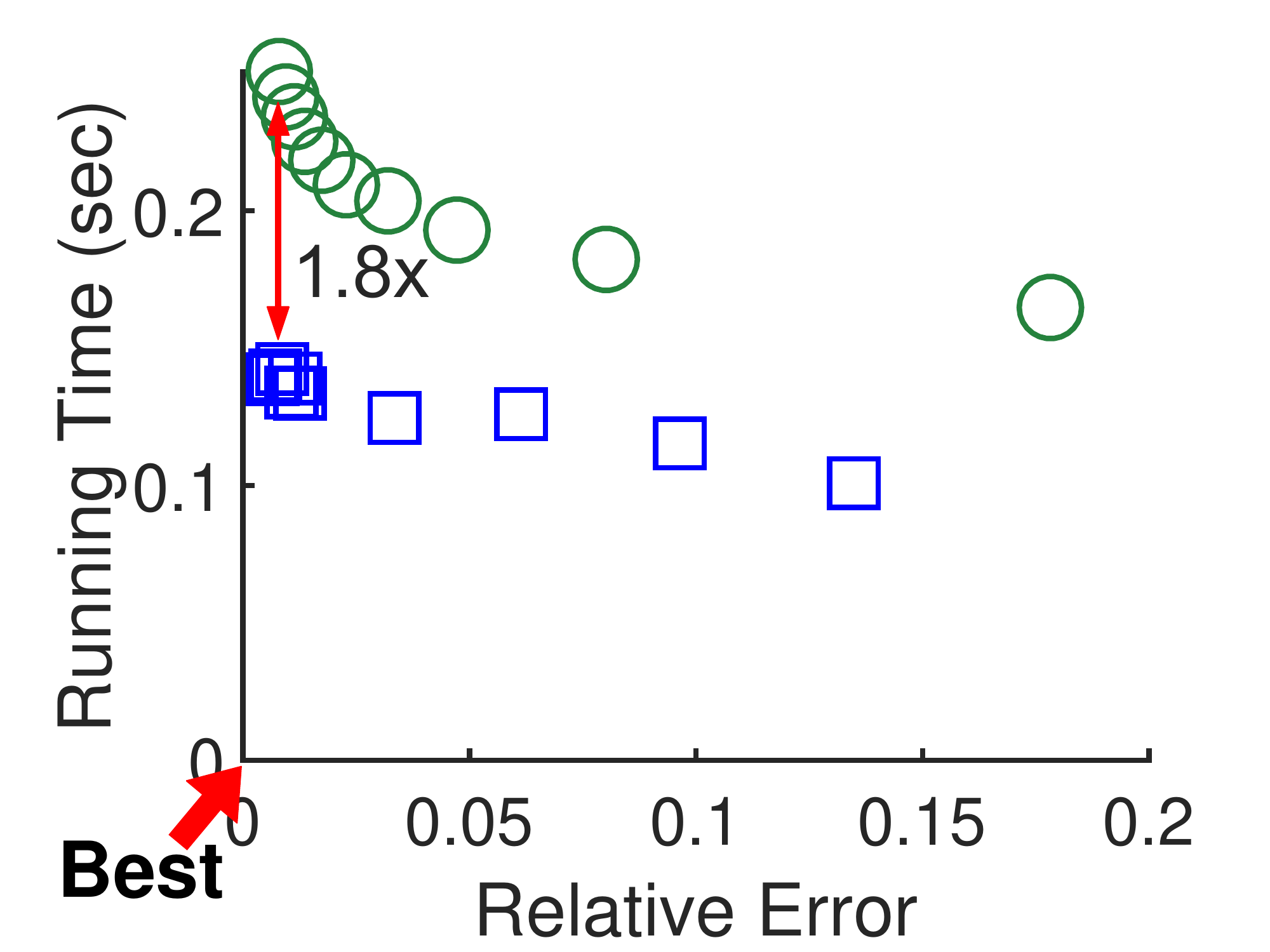}
	}
	\subfloat[\textbf{Haggle}]
	{	\includegraphics[width=0.23 \textwidth]{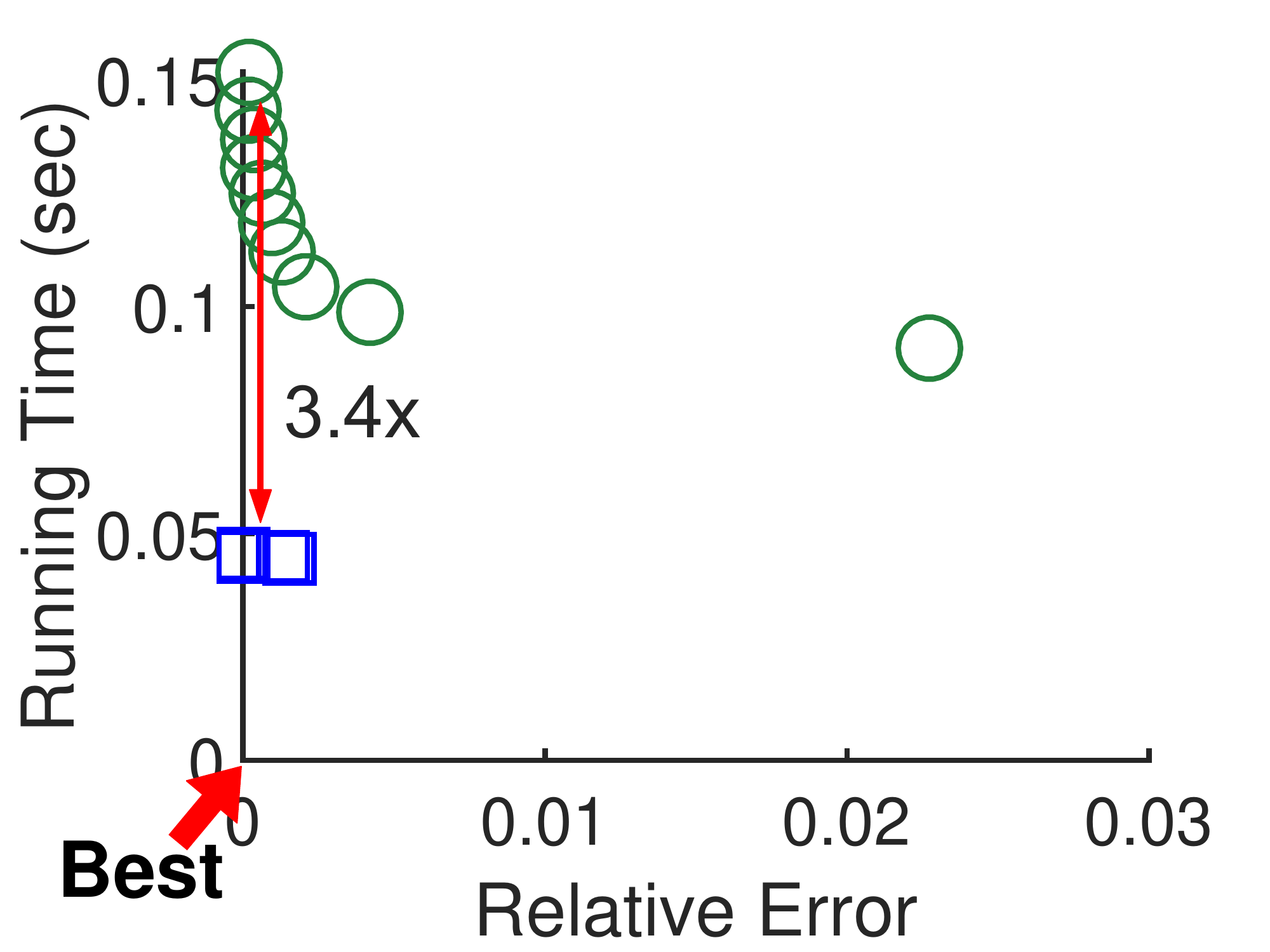}
	}
	\subfloat[\textbf{Manufacturing emails}]
	{	\includegraphics[width=0.23 \textwidth]{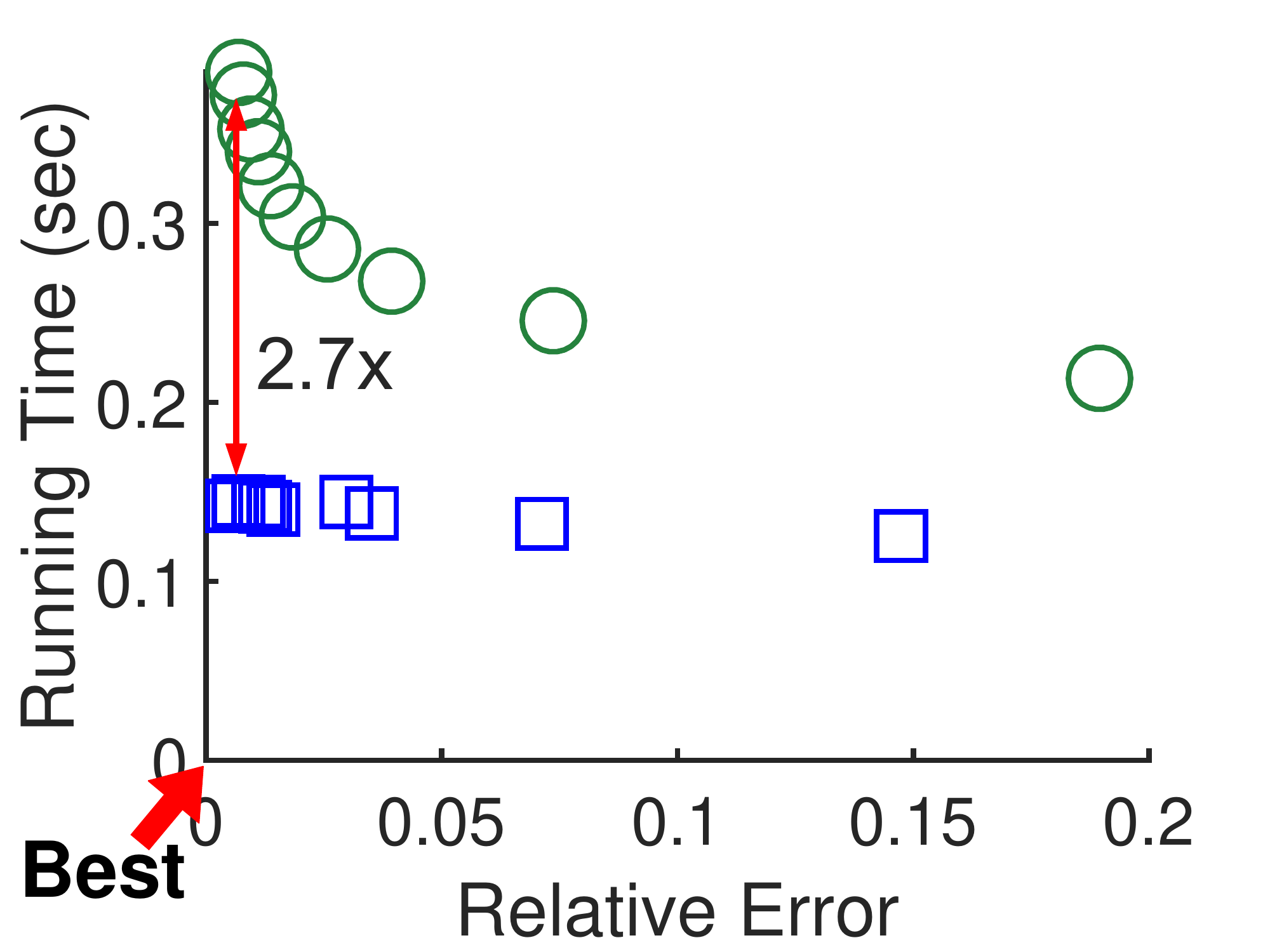}
	}
	\subfloat[\textbf{Infectious}]
	{	\includegraphics[width=0.23 \textwidth]{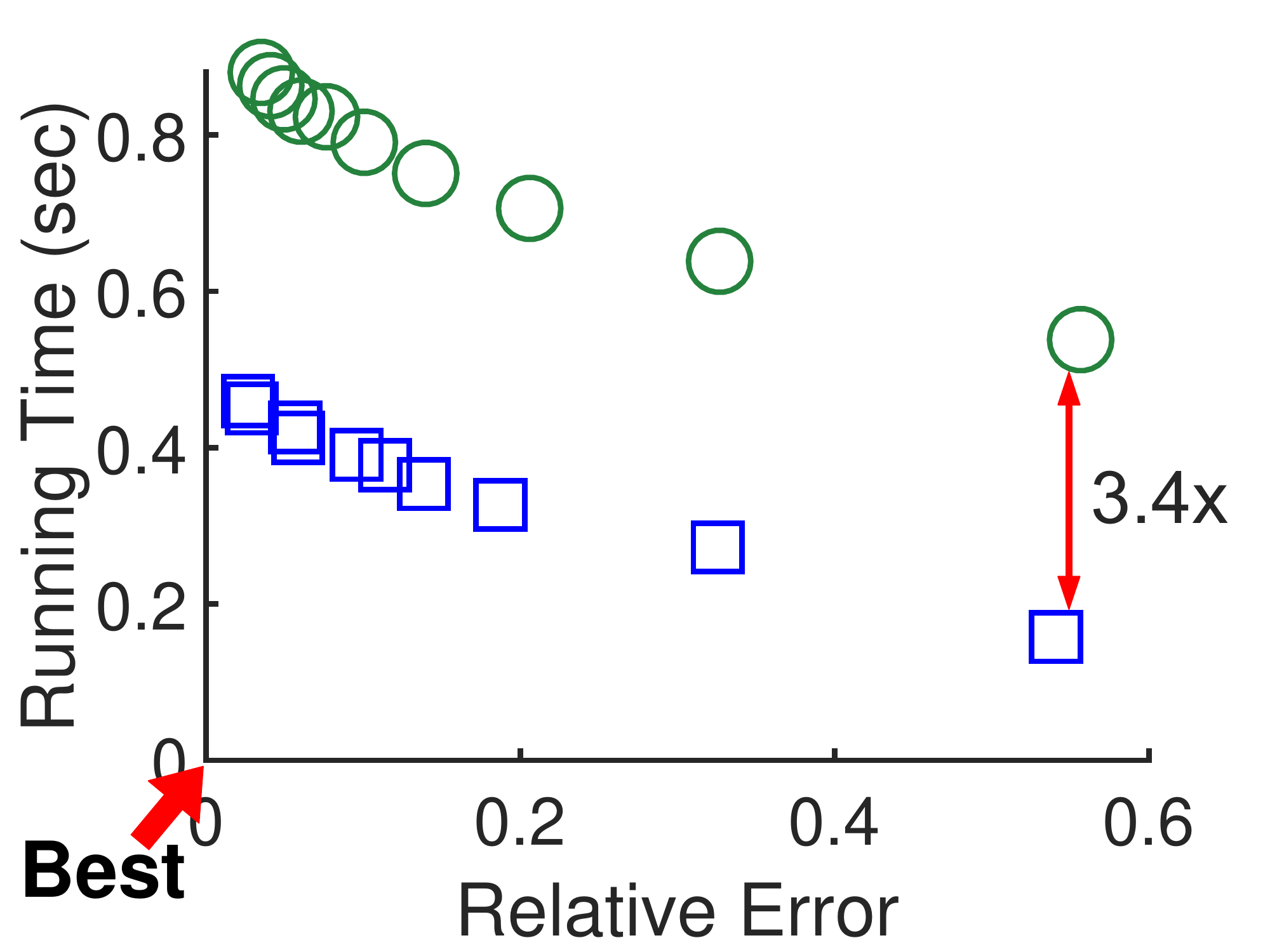}
	} \vspace*{-0.4cm}\\
	\subfloat[\textbf{Hypertext 2009}]
	{	\includegraphics[width=0.23 \textwidth]{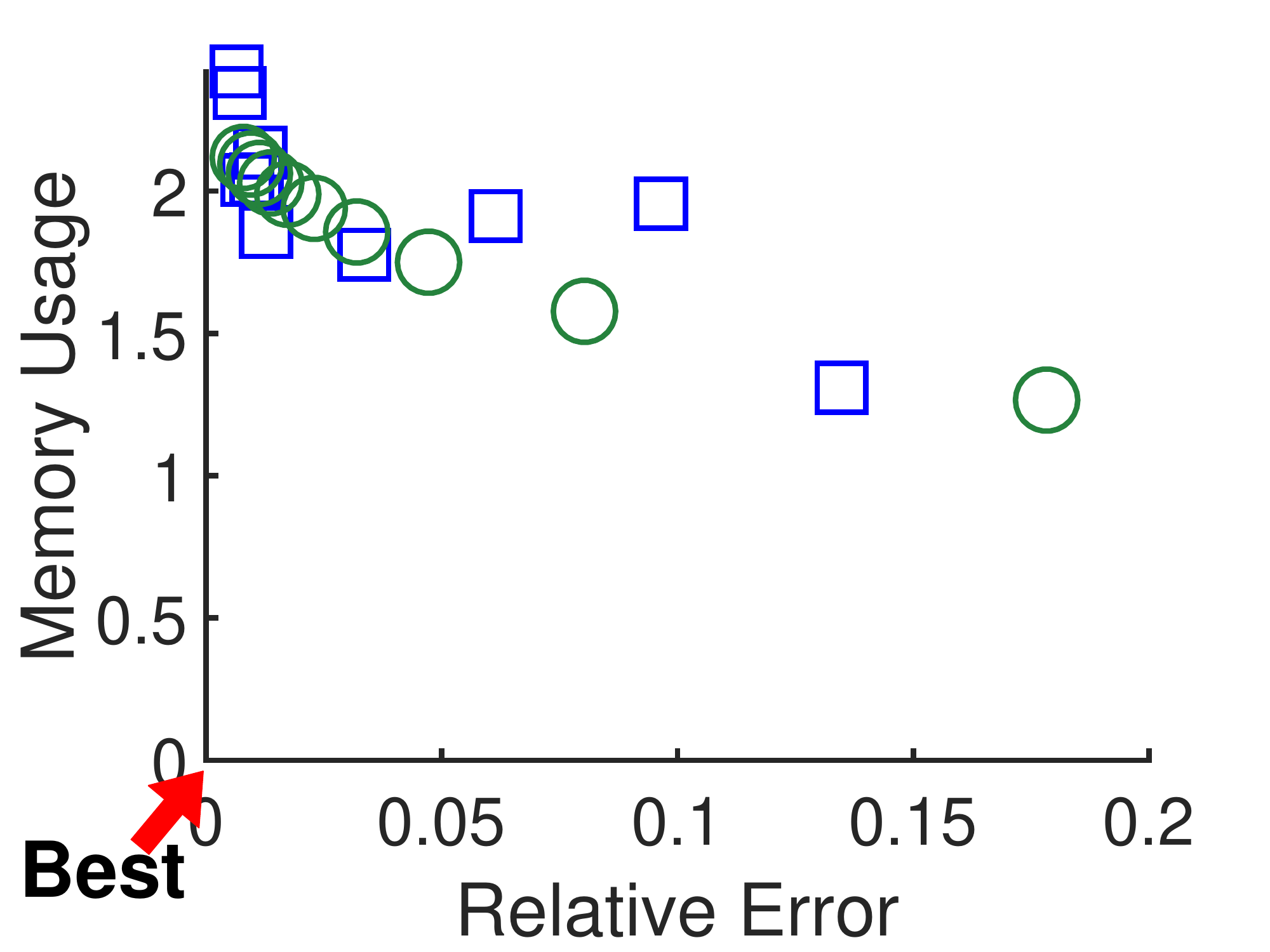}
	}
	\subfloat[\textbf{Haggle}]
	{	\includegraphics[width=0.23 \textwidth]{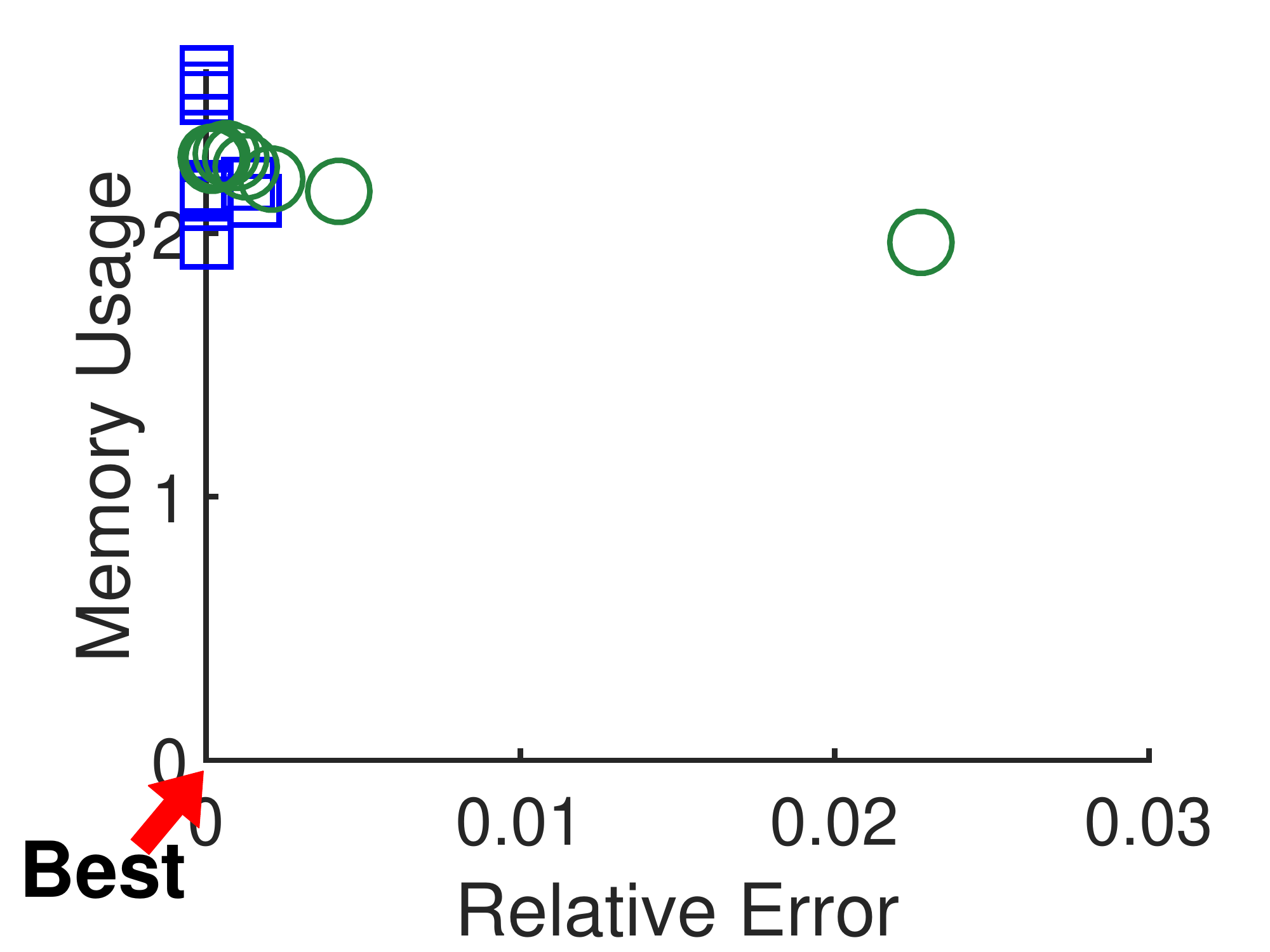}
	}
	\subfloat[\textbf{Manufacturing emails}]
	{	\includegraphics[width=0.23 \textwidth]{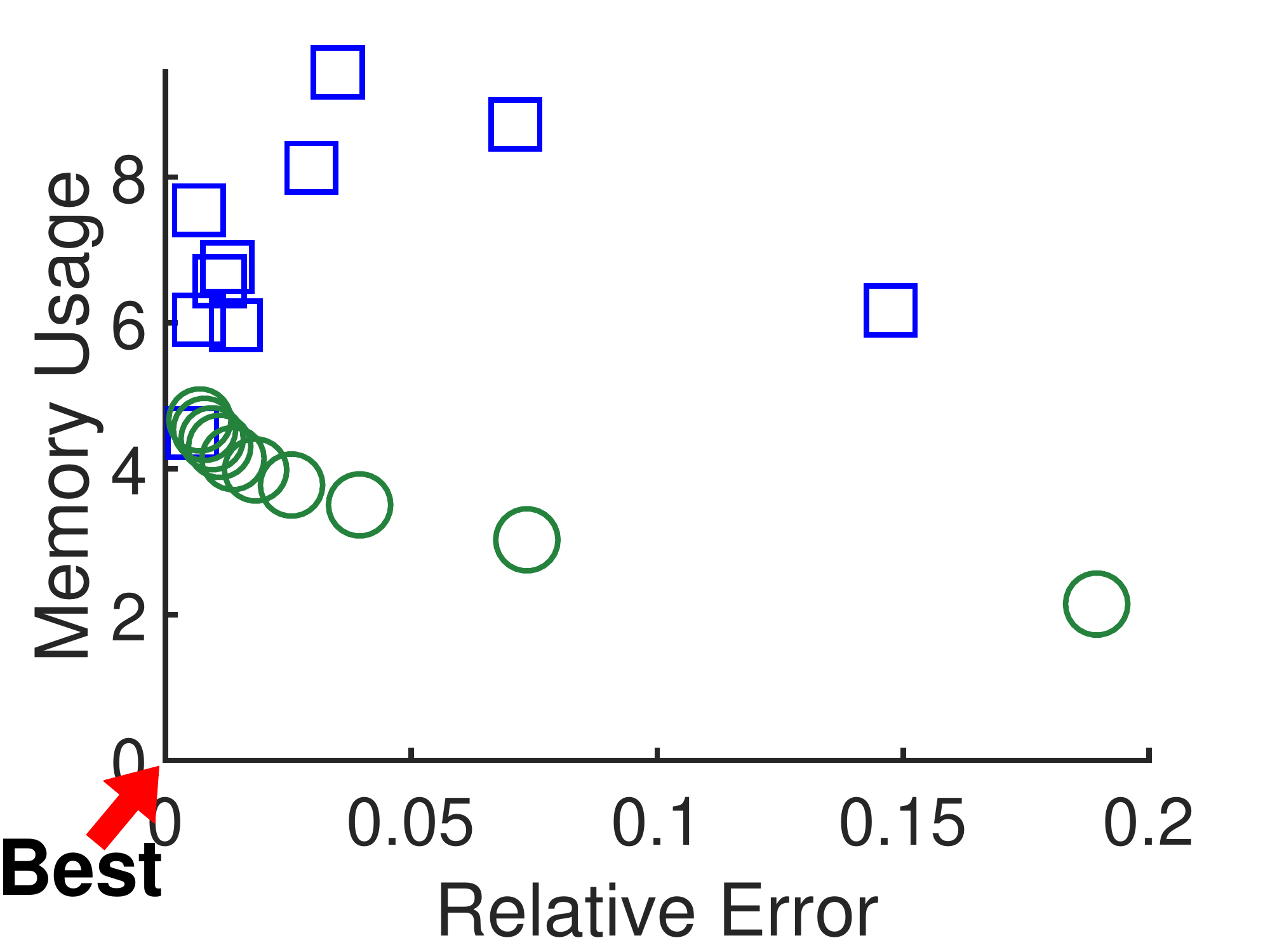}
	}
	\subfloat[\textbf{Infectious}]
	{	\includegraphics[width=0.23 \textwidth]{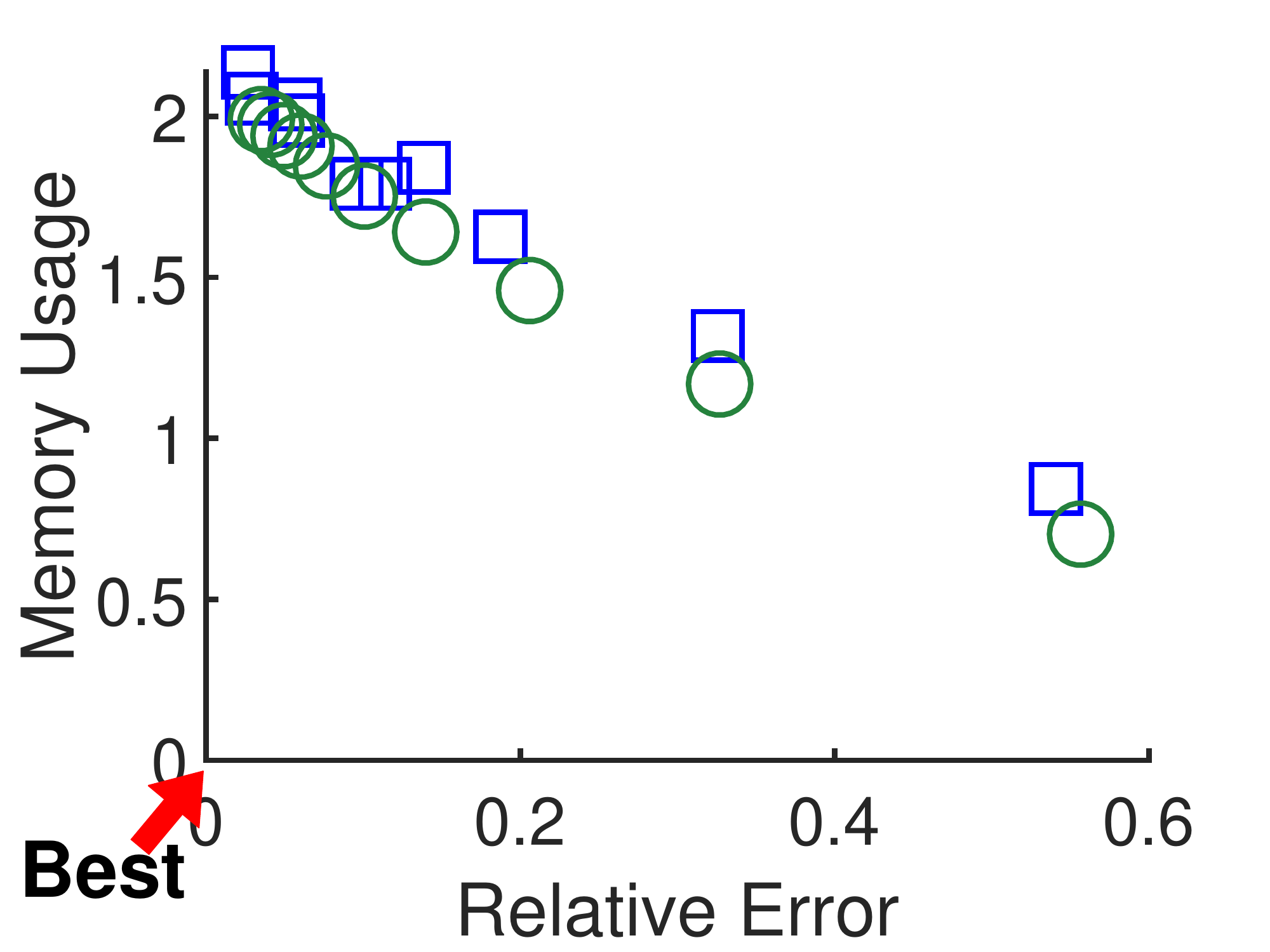}
	}
	
	\includegraphics[width=0.35 \textwidth]{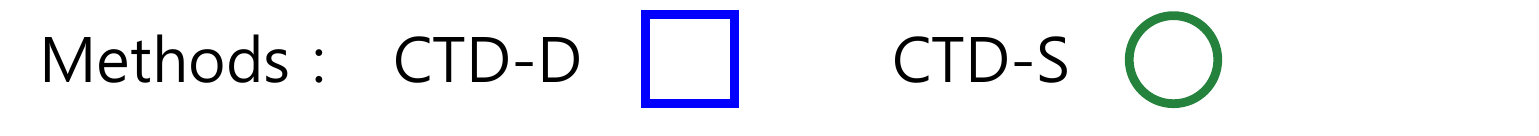}
	
	\caption{Error, running time, and memory usage relation of CTD-D compared to those of CTD-S. CTD-D is faster and has smaller error while using the same or slightly larger memory space compared to CTD-S.}
	\label{fig:CTD-D all}
\end{figure*}

Figure \ref{fig:CTD-D all} shows the error vs. running time and error vs. memory usage relation of CTD-D compared to those of CTD-S.
Note that CTD-D is much faster than CTD-S though CTD-D uses the same or slightly more memory than CTD-S does. This is because multiplication between sparse matrices used in updating $\T{C}$ does not always produce sparse output, thus the number of nonzero entries in $\T{C}$ increases slightly over time steps.

\section{CTD At Work}
\label{sec:ctd at works}
In this section, we use CTD-D for mining real-world network traffic tensor data.
Our goal is to detect DDoS attacks in network traffic data efficiently in an online fashion;
detecting DDoS attacks is a crucial task in network forensic.
We propose a novel online DDoS attack detection method based on CTD-D's interpretability.
We show that CTD-D is one of the feasible options for online DDoS attack detection and show how it detects DDoS attacks successfully. 
In contrast to the standard PARAFAC and Tucker decomposition methods, CTD-D can determine DDoS attacks from its decomposition result without expensive overhead. 
We aim to dynamically find a victim (destination host) and corresponding attackers (source hosts) of each DDOS attack in network traffic data. The victim receives a huge amount of traffic from a large number of attackers.

The online DDoS attack detection method based by CTD-D is as follows. First, we apply CTD-D on network traffic data which is a 3-mode tensor in the form of (source IP - destination IP - time).
We assume an online environment where each slab of the network traffic data in the form of (source IP - destination IP) arrives sequentially at every time step.
We use source IP mode as mode $\alpha$.
Second, we inspect the factor $\mathbf{R}$ of CTD-D, which consists of actual mode-$\alpha$ fibers from the original data.
$\mathbf{R}$ is composed of important mode-$\alpha$ fibers which signify major activities such as DDoS attack or heavy traffic to the main server.
Thanks to CTD, we directly find out destination host and occurrence time of a major activity represented in a fiber in $\mathbf{R}$, by simply tracking the indices of fibers.
We regard fibers with the same destination host index represent the same major activity, and consider the first fiber among those with the same destination host index to be the representative of each major activity.
Then, we select fibers with the norm higher than the average among the first fibers and suggest them as candidates of DDoS attack.
This is because DDoS attacks have much higher norms than normal traffic does.

We generate network traffic data by injecting DDoS attacks on the real-world CAIDA network traffic dataset. We assume that randomly selected 20\% of source hosts participate in each DDoS attack.
Table \ref{tab:network_result} shows the result of DDoS attack detection method of CTD-D. CTD-D achieves high F1 score for various number $n$ of injected DDoS attacks with notable precision. We set $d = 10$, $\epsilon = 0.15$.

\begin{table}[tp]
	\small
	\caption{The result of online DDoS attack detection method based on CTD-D. CTD-D achieves high F1 score for various $n$ with notable precision, where $n$ denotes the number of injected DDoS attacks.}
	\begin{center}
		{
			\begin{tabular}{C{1cm} C{2cm} C{2cm} C{2cm}}           \hline
				\toprule
				{\textbf{n}} &{\textbf{Recall}} & {\textbf{Precision}} & {\textbf{F1 score}} \\
				
				\midrule
				1 & 1.000 & 1.000 & 1.000 \\
				3 & 1.000 & 1.000 & 1.000 \\
				5 & 0.880 & 1.000 & 0.931 \\
				7 & 0.857 & 1.000 & 0.921 \\
				\bottomrule
			\end{tabular}
		}
	\end{center}
	\label{tab:network_result}
\end{table}


\section{Conclusion}
\label{sec:conclusions}
We propose CTD, a fast, accurate, and directly interpretable tensor decomposition method based on sampling.
The static version CTD-S is 17$\sim$83$\times$ more accurate, 5$\sim$86$\times$ faster, and 7$\sim$12$\times$ more memory-efficient compared to tensor-CUR, the state-of-the-art method.
The dynamic version CTD-D is 2$\sim$3$\times$ faster than CTD-S for an online environment. CTD-D is the first method providing interpretable \emph{dynamic} tensor decomposition.
We show the effectiveness of CTD for online DDoS attack detection.


\bibliographystyle{ACM-Reference-Format}
\bibliography{BIB/ms}

\end{document}